\numberwithin{equation}{section}
\newtheorem{thm}{Theorem}[section]
\newtheorem{prop}[thm]{Proposition}
\newtheorem{lem}[thm]{Lemma}
\newtheorem{conj}[thm]{Conjecture}
\newtheorem{dfn}[thm]{Definition}
\newtheorem{cor}[thm]{Corollary}
\numberwithin{equation}{section}
\newcommand{\F}{\mathbb{F}}
\newcommand{\N}{\mathbb{N}}
\newcommand{\Q}{\mathbb{Q}}
\newcommand{\Z}{\mathbb{Z}}
\newcommand{\f}{\mathbf{f}}
\newcommand{\mcO}{\mathcal{O}}
\newcommand{\mfm}{\mathfrak{m}}
\newcommand{\mfn}{\mathfrak{n}}
\newcommand{\mfp}{\mathfrak{p}}
\newcommand{\mfq}{\mathfrak{q}}
\newcommand{\mfP}{\mathfrak{P}}
\newcommand{\GL}{\mathrm{GL}}
\newcommand{\Gal}{\mathrm{Gal}}
\def\1{1\!\!1}
\newcommand{\mrm}[1]{\mathrm{#1}}
\title[On the solutions of Diophantine equations over $K$]{On the solutions of $x^p+y^p=2^rz^p$, $x^p+y^p=z^2$ over totally real fields}
\author[N. Kumar]{Narasimha Kumar}
\address[N. Kumar]{Department of Mathematics, Indian Institute of Technology Hyderabad, Kandi, Sangareddy 502285, INDIA.}
\email{narasimha@math.iith.ac.in}
\author[S. Sahoo]{Satyabrat Sahoo}
\address[S. Sahoo]{Department of Mathematics, Indian Institute of Technology Hyderabad, Kandi, Sangareddy 502285, INDIA.}
\email{ma18resch11004@iith.ac.in}
\keywords{Diophantine equations, $S$-unit equation, Semi-stability, Irreducibility of Galois representations, Modularity of elliptic curves, Level lowering}
\subjclass[2010]{Primary 11D41; Secondary 11F80, 11R04, 11R80}
\date{\today}
\begin{document}
	\maketitle
	\begin{abstract}
		In this article, we study the non-trivial primitive solutions of a certain type for the Diophantine equations $x^p+y^p=2^rz^p$ and $x^p+y^p=z^2$ of prime exponent $p$, $r \in \N$, over a totally real field $K$. Then for $r=2,3$, we study the non-trivial primitive solutions over $\mcO_K$ for the equation $x^p+y^p=2^rz^p$ of prime exponent $p$. Finally, we give several purely local criteria for $K$ such that the equation $x^p+y^p=2^rz^p$ has no non-trivial primitive solutions over $\mcO_K$.
	\end{abstract}

% {\color{red}
%       In this article, we study about the non-trivial primitive solutions for the Diophantine equations $x^p+y^p=2^rz^p$ and $x^p+y^p=z^2$ of prime exponent $p$ with $r \in \N$, over a totally real field $K$. We show that the Diophantine equation $x^p+y^p=2^rz^p$ (resp., $x^p+y^p=z^2$) of  exponent $p$ has no non-trivial solution in $W_K$ (resp., $W_K^\prime$) for primes $p \gg 0$. Then for $r=2,3$, we prove that the Diophantine equation $x^p+y^p=2^rz^p$ with prime exponent $p$ has no asymptotic solution in $\mcO_K^3$. Finally, we give several purely local criteria for which the equation~\eqref{x^p +y^p = 2^rz^p} of prime exponent $p$ has no asymptotic solution in $\mcO_K^3$ and $W_K$.}
% 	
	\section{Introduction}
	Throughout this article, $K$ denotes a totally real number field. Let 
	$P$, $\mathbb{P}$ denote $\mrm{Spec}(\mcO_K)$, $\mrm{Spec}(\Z)$, respectively. In the literature, the thrust to understand the Diophantine equations, especially the Fermat type equations over number fields, has a long and enthralling history. Though there were many approaches available, the cyclotomic and Mordell-Weil methods were proved to be more successful to understand these equations. In fact, the modularity of elliptic curves was crucial in the proof of Fermat's Last Theorem by Wiles (cf.~\cite[Theorem 0.5]{W95}).
%	This article uses modularity and level lowering, as well as image-of-inertia comparisons, to prove some important results for certain Diophantine equations over $K$.
% 	{\color{red}This article uses modularity of elliptic curves to prove some important results for the Diophantine equations $x^p+y^p=2^rz^p$ and $x^p+y^p=z^2$ over $K$.}

	In~\cite[Theorem 1.3]{JM04}, Jarvis and Meekin show that the Fermat equation $x^n+y^n=z^n$ of exponent $n\in \N$ has no non-trivial solutions over $\Z[\sqrt{2}]$ for $n\geq 4$. In \cite{Tur18}, \cite{Tur20}, \c{T}urca\c{s} studied the Fermat equation $x^p+y^p=z^p$ of exponent $p\in \mathbb{P}$, over imaginary quadratic fields of class number $1$.
	
	In~\cite[Theorem 3]{FS15}, Freitas and Siksek show that the asymptotic FLT holds for the Fermat equation $x^p+y^p=z^p$ over $K$, i.e., there exists a constant $B_K>0$ (depends on $K$) such that for primes $p >B_K$, the equation $x^p+y^p=z^p$ of exponent $p$ has no non-trivial solutions over $K$. 
    The proof depends upon certain explicit bounds on the solutions of $S$-unit equation~\eqref{S_K-unit solution}. 
	In \cite[Theorem 1.1]{SS18}, \c{S}eng\"{u}n and Siksek, extending the work in~\cite{FS15}, show that the asymptotic FLT holds for the Fermat equation $x^p+y^p=z^p$ over number fields. In~\cite[Theorem 1]{D16}, Deconinck extended the work in~\cite{FS15} to the Fermat equation $Ax^p+By^p=Cz^p$ with $2 \nmid ABC$. Later, in~\cite{KO20}, Kara and Ozman extended 
	the work in~\cite{D16} to number fields.

	In this article, we study the non-trivial primitive solutions (over $K$) of the Diophantine equation $x^p+y^p=2^rz^p$ with $p \in \mathbb{P},r \in \N$. Note that this equation is complementary to the one considered by Deconinck. We also study a similar question for the equation $x^p+y^p=z^2$ of exponent $p\in \mathbb{P}$ over $K$. 
	
	\begin{dfn}
    We say that the Diophantine equation $x^p+y^p=2^rz^p$ (resp., $x^p+y^p=z^2$) has no asymptotic solution in $S \subseteq \mcO_K^3$, if there exists a constant $B_{K,r}$ (resp., $B_K$) depending on $K,r$ (resp., on $K$) such that for primes $p >B_{K,r}$ (resp., $B_K$), the equation $x^p+y^p=2^rz^p$ (resp., $x^p+y^p=z^2$) of exponent $p$ has no non-trivial primitive solutions in $S$.
	\end{dfn}

	In~\cite[Theorem 3]{R97}, Ribet shows that there are no non-zero integer solutions to the equation $x^p+2^ry^p+z^p=0$ of exponent $p\in \mathbb{P}$ with $1\leq r <p$. In~\cite{DM97}, Darmon and Merel show that the equation $x^n+y^n=2z^n$ of exponent $n\in \N$ has no non-trivial primitive integer solutions for $n \geq 3$.
	
	In this article, we first show that the equation $x^p+y^p=2^rz^p$ of exponent $p\in \mathbb{P}$, with $r \in \N$, has no asymptotic solution in $W_K$, where $W_K$ is as in Definition~\ref{definition for W_K} (cf.\ Theorem~\ref{main result for x^p+y^p=2^rz^p}). Later, for $r=2,3$, we show that the   equation $x^p+y^p=2^rz^p$ of exponent $p\in \mathbb{P}$ has no asymptotic solution in $\mcO_K^3$ (cf. Theorem~\ref{main result1 for x^p+y^p=2^rz^p}). The proofs of Theorem~\ref{main result for x^p+y^p=2^rz^p} and Theorem~\ref{main result1 for x^p+y^p=2^rz^p} depend on certain explicit bounds on the solutions of $S_K$-unit equation~\eqref{S_K-unit solution}.

%	In this article, we first show that asymptotic FLT holds for 
%    the   equation $x^p+y^p=2^rz^p$ with exponent	$p \in \mathbb{P}$, over $\mcO_K$ (or in $W_K$) (cf. \S \ref{notations section for x^p+y^p=2^rz^p} for the definition of $W_K$). More precisely, we show that there exists a constant $B_{K,r}>0$ (depends on $K$, $r$) such that for primes $p >B_{K,r}$, the equation $x^p+y^p=2^rz^p$ of exponent $p$ has no non-trivial primitive solutions in $W_K$ (cf. Theorem~\ref{main result for x^p+y^p=2^rz^p}). Under a mild hypothesis, we show that a similar statement holds for the equation $x^p+y^p=2^rz^p$ with $r=2,3$, over $\mcO_K$ (cf. Theorem~\ref{main result1 for x^p+y^p=2^rz^p}). The proofs of Theorem~\ref{main result for x^p+y^p=2^rz^p}, Theorem~\ref{main result1 for x^p+y^p=2^rz^p} depend on the explicit bounds on the solutions of $S_K$-unit equation~\eqref{S_K-unit solution}. 
	
	In~\cite{DM97}, Darmon and Merel show that the equation $x^n+y^n=z^2$ of exponent $n\in \N$ has no non-trivial primitive integer solutions for $n\geq 4$. In~\cite[Theorem 1.1]{IKO20}, I\c{s}ik, Kara and Ozman show that, for any totally real field $K$ with narrow class number $h_K^+=1$ and \textbf{if there exists $\mathfrak{P} \in P$ over $2$ with residual degree $1$}, then the equation $x^p+y^p=z^2$ of exponent $p\in \mathbb{P}$ has no asymptotic solution of certain type over $K$. This proof depends upon the certain explicit bounds on the solutions of the $S_K$-unit equation~\eqref{S_L unit solution for (p,p,2)}. In~\cite[Theorem 6.1]{IKO22}, they extended this work to number fields.

%	then there exists a constant $B_K>0$ (depends on $K$) such that for primes $p >B_K$, the   equation $x^p+y^p=z^2$ of exponent $p$ has no non-trivial solutions of certain type over $K$. This proof also depends upon the explicit bounds on the solutions of the $S_K$-unit equation. 
%In~\cite[Theorem 6.1]{IKO22}, they extended this work to number fields.
%	
%	{\color{red} In~\cite[Theorem 6.1]{IKO22}, I\c{s}ik, Kara and Ozman has extended the work in ~\cite{IKO20} to number fields. In \cite[Theorem 3]{M22}, Mocanu proved a variant of the result of ~\cite{IKO20} and show that there exists a constant $B_K>0$, depends on $K$, such that for primes $p >B_K$ the equation $x^p+y^p=z^2$ has no non-trivial solutions of certain type over $K$. But the proof of \cite[Theorem 3]{M22} depends on explicit bounds on the solutions of the equation $\alpha +\beta =\gamma^2$, with $\alpha, \beta \in \mcO_{S_K}^\ast$ and $\gamma \in  \mcO_{S_K}$ (cf. \S \ref{notations section for x^p+y^p=2^rz^p}, for the definition of $\mcO_{S_K}$ and $\mcO_{S_K}^\ast$).} 

In this article, we relax the assumption on the existence of $\mfP \in P$ over $2$ with residual degree 1 and $h_K^+=1$ in~\cite[Theorem 1.1]{IKO20}(cf. Theorem~\ref{main result1 for (p,p,2)}). More precisely, we show that the equation $x^p+y^p=z^2$ of exponent $p\in \mathbb{P}$ has no asymptotic solution in $W_K^\prime$ (cf. Definition~\ref{definition for W_K'} for $W_K'$).
%	This follows from Lemma~\ref{reduction on T_K and U_K on W_k' for (p,p,2)}, where we generalize~\cite[Lemma 3.5]{IKO20} for any prime $\mfP \in P$ with $\mfP|2$. 
	%	Then using an alternative hypothesis, we show that the equation $x^p+y^p=z^2$ has no non-trivial solution in $K$ (cf. Theorem~\ref{main result2 for (p,p,2)}).
	This proof depends upon certain explicit bounds on the solutions of the $S_K$-unit equation~\eqref{S_L unit solution for (p,p,2)}. 
Recently, Mocanu generalized \cite[Theorem 1.1]{IKO20}, but with a different hypothesis (cf. \cite[Theorem 3]{M22}).
Namely, from the assumption $h_K^+=1$ to $\mathrm{Cl}_{S_K}(K)=1$. In this case, the proof depends upon some explicit bounds on the solutions of $\alpha +\beta =\gamma^2$, with $\alpha, \beta \in \mcO_{S_K}^\ast$ and $\gamma \in  \mcO_{S_K}$ (cf. \cite[page 3]{M22} for the definition of $\mathrm{Cl}_{S_K}(K)$, $\mcO_{S_K}$ and $\mcO_{S_K}^\ast$).

% (resp., $\alpha +1 =\gamma^2$)
%(resp.,  $2$ is inert and $2 \nmid h_K^+$).      	Note that, if every prime in $S_K$ is principal, then the condition $Cl_{S_K}(K)=1$ is equivalent to $2\nmid h_K^+$. This shows that our result is quite different from that  of Mocanu.
% We also generalize~\cite[Theorem  5]{M22} and show that this result is true whenever $\mfP$ is principal for all $\mfP \in S_K$ (cf. Theorem~\ref{main result1 for x^p+y^p=z^2}).  
	In~\cite{FKS21}, the authors gave some purely local criteria for $K$ such that the asymptotic FLT holds for the Fermat equation $x^p+y^p=z^p$ over $K$. In the last section, we give several purely local criteria for $K$ such that the equation $x^p+y^p=2^rz^p$ (resp., $x^p+y^p=2^rz^p$ with $r=2,3$) of exponent $p\in \mathbb{P}$ has no asymptotic solution in $W_K$ (resp., in $\mcO_K^3$). 
%	We employ the techniques of \cite{FS15}, \cite{IKO20} and \cite{M22}
%	to prove our main results, i.e., Theorems~\ref{main result for x^p+y^p=2^rz^p},~\ref{main result1 for x^p+y^p=2^rz^p},~\ref{main result1 for (p,p,2)} and ~\ref{main result1 for x^p+y^p=z^2}.
%	The strategy is to use the modularity of the Frey curve $E$ attached to a non-trivial solution, the irreducibility of the residual representation $\bar{\rho}_{E,p}$ and level lowering results. 

In the proofs of Theorems~\ref{main result for x^p+y^p=2^rz^p},~\ref{main result1 for x^p+y^p=2^rz^p} and ~\ref{main result1 for (p,p,2)}, we use the modularity of the Frey curve $E$ attached to a non-trivial primitive solution, irreducibility of the residual Galois representation $\bar{\rho}_{E,p}$, semi-stable reduction of $E$ at odd primes and level lowering.
	
	\subsection{Structure of the article:}
	The article is organized as follows. In \S\ref{section for preliminary}, we collate all the preliminaries required to prove main results. In \S\ref{notations section for x^p+y^p=2^rz^p} and \S\ref{section for proof of main results x^p+y^p=2^rz^p} (resp., \S\ref{section for AFLT (p,p,2)} and \S\ref{section for the proof of main result1 for (p,p,2)}), we state and prove Theorem~\ref{main result for x^p+y^p=2^rz^p},~\ref{main result1 for x^p+y^p=2^rz^p} (resp., Theorem~\ref{main result1 for (p,p,2)}) for the equation $x^p+y^p=2^rz^p$ (resp., $x^p+y^p=z^2$) of exponent $p\in \mathbb{P}$.

 	\section*{Acknowledgments}  
 	The authors sincerely thank Prof. Nuno Freitas for his help in understanding the article~\cite{FS15}.

	\section{Preliminaries}
	\label{section for preliminary}
	In this section, we recall some preliminaries and known results required to prove the main theorems of this article. Throughout this article, we denote $\mcO_K$, $\mfn$ and $p$ for the ring of integers of $K$,
	an ideal of $\mcO_K$ and a rational prime, respectively.
	\subsection{On Modularity result}
	We first recall a modularity result (cf.~\cite[Theorem 5]{FLBS15}).
	\begin{thm}
		\label{modularity result of elliptic curve over totally real}
		Up to isomorphism over $\bar{K}$, there are only a finite number of elliptic curves $E$ over $K$ which are not modular. Further, if $K$ is real quadratic, then every elliptic curve over $K$ is modular.
	\end{thm}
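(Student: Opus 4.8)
The plan is to deduce modularity of $E/K$ from modularity of one of its residual representations $\bar{\rho}_{E,p}$ for a small prime $p \in \{3,5,7\}$, by invoking the modularity lifting theorems available over totally real fields. Concretely, I would first recall that $\bar{\rho}_{E,3}$ is modular by the Langlands--Tunnell theorem (since $\GL_2(\F_3)$ is solvable), and that the modularity lifting theorems of Kisin, Gee, and Barnet-Lamb--Gee--Geraghty apply whenever $\bar{\rho}_{E,3}$ is irreducible with suitable restriction to the decomposition groups above $3$; in that situation $E$ is modular. When those hypotheses fail at $p=3$, one runs the analogous argument at $p=5$ and then at $p=7$, using the prime-switching trick of Wiles (the $3$--$5$ switch and its $5$--$7$ analogue): one produces an auxiliary elliptic curve $E'/K$ with $\bar{\rho}_{E',5}\cong \bar{\rho}_{E,5}$ (resp.\ $\bar{\rho}_{E',7}\cong \bar{\rho}_{E,7}$) whose mod-$3$ (resp.\ mod-$5$) representation satisfies the good hypotheses, so $E'$ is modular, hence $\bar{\rho}_{E,5}$ (resp.\ $\bar{\rho}_{E,7}$) is modular, and a further lifting step gives modularity of $E$.

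The curves $E/K$ for which \emph{none} of $p=3,5,7$ yields modularity are exactly those whose mod-$p$ image is simultaneously too small for all three primes (reducible, projectively dihedral, or exceptional of type $A_4,S_4,A_5$), together with a few cases coming from the failure of the local conditions at $p$. Each such constraint cuts out a $K$-rational point on a modular curve: the reducible cases give points on $X_0(p)$ and on fibre products such as $X_0(15)$, $X_0(21)$, $X_0(35)$, while the exceptional cases give points on the modular curves classifying the relevant exceptional subgroups. By Faltings's theorem, the curves of genus $\geq 2$ have only finitely many $K$-points, so they contribute only finitely many $\bar{K}$-isomorphism classes of $E$; for the genus $0$ and $1$ pieces one argues directly that the associated $E$ has complex multiplication, or is a $\Q$-curve, hence is modular. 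This proves the first assertion.

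For the real quadratic refinement one must upgrade "finitely many $K$-points" to "no non-modular $E$" when $[K:\Q]=2$. The new phenomenon is that some of the relevant modular curves, though of genus $\geq 2$, acquire infinitely many quadratic points (being hyperelliptic, or bielliptic with a positive-rank quotient). I would go through the finite list of curves produced above, determine all quadratic points on each --- via explicit models together with elliptic/quadratic Chabauty and the known determinations of quadratic points on small $X_0(N)$ --- and verify in every case that the corresponding $E$ is a $\Q$-curve, has CM, or is otherwise already known to be modular (modular $\Q$-curves being modular by Ribet and the proof of Serre's conjecture). Then no non-modular elliptic curve survives over a real quadratic field.

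The main obstacle is precisely this last step: the explicit determination of all quadratic points on the exceptional modular curves and the verification that each gives a modular $E$. The genus-$0$ and genus-$1$ pieces require identifying the associated $E$ as a $\Q$-curve or a CM curve, and the higher-genus pieces with infinitely many quadratic points require genuine Chabauty-type computations; this is where the real content lies, whereas the modularity-lifting half of the argument is essentially a black-box application of the known theorems over totally real fields.
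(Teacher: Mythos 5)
This theorem is not proved in the paper at all: it is quoted verbatim from Freitas--Le Hung--Siksek \cite{FLBS15} (Theorem 5 there for the finiteness statement over a general totally real field, Theorem 1 for the real quadratic case), so there is no internal proof to compare your attempt against. Measured against the actual proof in the cited source, your outline does identify the correct strategy: modularity of $\bar{\rho}_{E,3}$ via Langlands--Tunnell, the modularity lifting theorems over totally real fields (Kisin, Gee, Breuil--Diamond, Barnet-Lamb--Gee--Geraghty) at $p=3,5,7$, the $3$--$5$ and $5$--$7$ prime switches, reduction of the residual cases to $K$-points on finitely many modular curves, Faltings for finiteness, and an explicit determination of quadratic points on those curves for the real quadratic refinement.

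Two corrections of substance, though. First, the obstruction to running the lifting argument at $p\in\{3,5,7\}$ in \cite{FLBS15} is that $\bar{\rho}_{E,p}(\Gal(\bar{K}/K))$ is contained in a Borel subgroup or in the normalizer of a (split or non-split) Cartan subgroup; the exceptional projective images $A_4$, $S_4$, $A_5$ are \emph{not} among the genuine obstructions there, and the curves that actually arise are the fibre products $X(\mathrm{b}3,\mathrm{b}5)$, $X(\mathrm{s}3,\mathrm{b}5)$, $X(\mathrm{b}3,\mathrm{s}5)$, $X(\mathrm{s}3,\mathrm{s}5)$, $X(\mathrm{b}5,\mathrm{b}7)$, $X(\mathrm{b}5,\mathrm{ns}7)$, $X(\mathrm{ns}3,\mathrm{b}7)$ with Cartan-normalizer level structure, not only the $X_0(N)$ you list. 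Second, as a self-contained argument your text is only a road map: the precise local hypotheses of the lifting theorems, the verification that the exceptional curves have genus $\ge 2$ (or can be handled by rank computations in the genus $0$ and $1$ cases), and above all the computation of all quadratic points on them, are exactly the content of the cited fifty-page paper and cannot be supplied in a paragraph. For the purposes of the present article the statement should simply be treated as a black-box citation of \cite{FLBS15}.
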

	\subsection{Eichler-Shimura}
    We now state a standard conjecture for $K$, which is a
	generalization of the Eichler-Shimura theorem over $\Q$.
	\begin{conj}[Eichler-Shimura]
		\label{ES conj}
		Let $f$ be a Hilbert modular newform over $K$ of parallel weight $2$, level $\mfn$ and with rational eigenvalues. Then there exists an elliptic curve $E_f /K$ of conductor $\mfn$ having same $L$-function as $f$.
	\end{conj}
	In~\cite{B04}, Blasius based on the work of Hida~\cite{H81}, gave a partial answer to Conjecture~\ref{ES conj}. More precisely:
	
	\begin{thm}
		\label{partial result to ES conj}
		Let $f$ be a Hilbert modular newform over $K$ of parallel weight $2$, level $\mfn$ and with rational eigenvalues. Suppose that either $[K: \Q] $ is odd or there is a finite prime $\mfq$ of $K$ such that $v_\mfq(\mfn) = 1$. 
		%	Then Conjecture~\ref{ES conj} holds.
		Then there exists an elliptic curve $E_f /K$ of conductor $\mfn$ having same $L$-function as $f$.
	\end{thm}
 
	For any elliptic curve $E/K$, let $\bar{\rho}_{E,p} : G_K:=\Gal(\overline{K}/K) \rightarrow \mathrm{Aut}(E[p]) \simeq \GL_2(\F_p)$ be the residual Galois representation of $G_K$, acting on the $p$-torsion points of $E$. For any Hilbert modular newform $f$ over $K$ of weight $k$, level $\mfn$ and character $\Psi$ with coefficient field $\Q_f$, let
    $\bar{\rho}_{f, \lambda}: G_K \rightarrow \GL_2(\F_\lambda)$
    be the residual Galois representation attached to $f, \lambda$, where $\lambda \in \mrm{Spec}(\mcO_{\Q_f})$. 
    
    In \cite[Corollary 2.2]{FS15}, Freitas and Siksek provided a partial answer to Conjecture~\ref{ES conj} in terms of mod $p$ Galois representations. More precisely:

	\begin{thm}
		\label{FS partial result of E-S conj}
		Let $E$ be an elliptic curve over $K$ and $p$ be an odd prime.
		Suppose that $\bar{\rho}_{E,p}$ is irreducible and $\bar{\rho}_{E,p} \sim \bar{\rho}_{f,p}$ for some Hilbert modular newform $f$ over $K$ of parallel weight $2$, level
		$\mfn$ with rational eigenvalues.
		Let $\mfq \nmid p$ be a prime of $K$ such that
		\begin{enumerate}
			\item E has potentially multiplicative reduction at $\mfq$,
			\item $p| \# \bar{\rho}_{E,p}(I_\mfq)$ and $p \nmid \left(\mathrm{Norm}(K/\Q)(\mfq) \pm 1\right)$.
		\end{enumerate}
		Then there exists an elliptic curve $E_f /K$ of conductor $\mfn$ having same $L$-function as of $f$.
	\end{thm}

	\subsection{Irreducibility of mod $p$ representations of elliptic curves}
	\label{section for irreducibilty x^p+y^p=2z^p}
	In~\cite[Theorem 2]{FS15 Irred}, Freitas and Siksek gave a criterion for determining the irreducibility of 
	$\bar{\rho}_{E,p}$, for any elliptic curve $E$ over $K$. More precisely:
	
	\begin{thm}
		\label{irreducibility of mod $P$ representation}
		Let $K$ be a totally real Galois field. Then there exists an effective constant $C_K$ (depends on $K$) such that, if $p>C_K$ is a prime and $E/K$ is an elliptic curve over $K$ which is semi-stable at all $\mfq |p$, then $\bar{\rho}_{E,p}$ is irreducible.
	\end{thm}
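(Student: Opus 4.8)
The plan is to argue by contradiction: assume $p$ is large and $\bar{\rho}_{E,p}$ is reducible, and extract a bound on $p$ depending only on $K$. Reducibility means $E$ has a $K$-rational subgroup of order $p$, so in a suitable basis of $E[p]$ one has $\bar{\rho}_{E,p} = \psmat{\theta}{*}{0}{\theta'}$ for characters $\theta,\theta' : G_K \to \F_p^\times$ with $\theta\theta' = \chi$, where $\chi$ denotes the mod $p$ cyclotomic character. Everything then reduces to showing that the isogeny character $\theta$ is so rigid that its existence for large $p$ is impossible.

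First I would study $\theta$ locally at the primes $\mfq \mid p$. Since $E$ is semi-stable at each such $\mfq$, the restriction $\bar{\rho}_{E,p}|_{I_\mfq}$ is described by the results of Serre and Raynaud (valid for $p$ large in terms of the ramification of $K/\Q$): good supersingular reduction at $\mfq$ is impossible, since it would make $\bar{\rho}_{E,p}|_{I_\mfq}$ irreducible, contradicting the global reducibility; hence $E$ is ordinary or multiplicative at every $\mfq\mid p$, and there $\theta|_{I_\mfq}$ equals $1$ or $\chi|_{I_\mfq}$, with $\theta'$ taking the complementary value. Raising to the twelfth power also kills the tame part of $\theta$ at primes of additive reduction away from $p$, and after a quadratic twist of $E$ chosen unramified above $p$ --- which changes neither the projective representation nor the reducibility --- one arranges that $\eta := \theta^{12}$ is a character of $G_K$ unramified outside $p$ with $\eta|_{I_\mfq} = \chi^{12 a_\mfq}|_{I_\mfq}$, $a_\mfq \in \{0,1\}$, for each $\mfq\mid p$. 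Class field theory then bounds the degree and conductor of the abelian extension of $K$ cut out by $\eta$ in terms of $K$ alone.

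Next I would bring in the hypothesis that $K/\Q$ is Galois. For a prime $\mfq$ of good reduction, evaluating $\bar{\rho}_{E,p}$ at $\Frob_\mfq$ yields $\theta(\Frob_\mfq) + \mathrm{Norm}(\mfq)\theta(\Frob_\mfq)^{-1} \equiv a_\mfq(E) \pmod{\mfP}$, while $|a_\mfq(E)| \le 2\sqrt{\mathrm{Norm}(\mfq)}$ by Hasse--Weil; applied to well-chosen auxiliary primes (e.g.\ primes above rational primes splitting completely in $K$), this congruence forces $p$ to divide a specific nonzero integer bounded in terms of $K$, unless $\eta$ has a very special ``middle'' shape. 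The Galois hypothesis enters in showing that the pattern $(a_\mfq)_{\mfq\mid p}$ is stable under $\Gal(K/\Q)$, so that only finitely many shapes of $\eta$ survive and the exceptional one is, up to a character of bounded order, $\theta^2 = \chi$. This last case must be excluded by an independent argument: it produces a point of order $p$ on some model of $E$ over a fixed finite extension of $K$, impossible for $p$ large by the uniform boundedness of torsion on elliptic curves over number fields of bounded degree.

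The genuinely hard step is this last one --- controlling the middle character $\theta^2=\chi$ uniformly in $K$, and more generally bounding isogeny characters over a general totally real field; the local analysis at $p$ and the class-field-theoretic bookkeeping are essentially formal once semi-stability at $p$ is in hand. It is precisely in ruling out the middle case, and in making every bound above effective, that one needs both the hypothesis that $K$ is Galois over $\Q$ and the deeper inputs of David's work on isogeny characters together with Merel-type uniform torsion bounds --- this being the content of~\cite{FS15 Irred}.
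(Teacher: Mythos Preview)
The paper does not prove this theorem at all: it is quoted verbatim as \cite[Theorem~2]{FS15 Irred} and used as a black box, with no argument supplied. So there is no ``paper's own proof'' to compare against.

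Your proposal is not so much a proof as a narrative outline of the Freitas--Siksek argument in \cite{FS15 Irred}, and you say as much in your final sentence. As a sketch it is broadly faithful to that paper: the reduction to isogeny characters, the local analysis at $\mfq\mid p$ via semi-stability (ruling out supersingular reduction, forcing $\theta|_{I_\mfq}\in\{1,\chi|_{I_\mfq}\}$), the passage to $\theta^{12}$ to kill tame ramification, the use of the Galois hypothesis to control the pattern of exponents, and the isolation of the exceptional case $\theta^2=\chi$ are all genuine ingredients there. One caution: your handling of the exceptional case is a bit loose --- in \cite{FS15 Irred} the middle case is dispatched not by invoking Merel's uniform torsion bound directly on $E$, but by a more careful analysis (involving Momose--David style arguments on isogeny characters and, in places, explicit computation), and the effectivity of the constant $C_K$ is delicate precisely here. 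But since the present paper only \emph{cites} the result, none of this matters for your purposes: simply invoking \cite[Theorem~2]{FS15 Irred} is what the authors do, and is all that is needed.
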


	\subsection{Level lowering}
	Let $E/K$ be an elliptic curve of conductor $\mfn$. For a prime ideal $\mfq$ of $K$, let $\Delta_\mfq$ be the discriminant of a minimal local model of $E$ at $\mfq$. Let 
	
	\begin{equation}
		\label{conductor of elliptic curve}
		\mfm_p:= \prod_{ p|v_\mfq(\Delta_\mfq), \ \mfq ||\mfn} \mfq \text{\quad and \quad } \mfn_p:=\frac{\mfn}{\mfm_p}.
	\end{equation}
    In~\cite[Theorem 7]{FS15}, the authors talked about the level lowering of mod $p$ Galois representations attached to elliptic curves over $K$.
	More precisely, they prove
	\begin{thm}
		\label{level lowering of mod $p$ repr}
		%	Let $K$ be a totally real field and 
		Let $E/K$ be an elliptic curve of conductor
		$\mfn$. Let $p$ be a rational prime. Suppose that the following conditions hold:
		\begin{enumerate}
			\item  For $p \geq 5$, the ramification index $e(\mfq /p) < p-1$ for all $\mfq |p$ and $\Q(\zeta_p)^+ \nsubseteq K$;
			\item $E/K$ is modular and $\bar{\rho}_{E,p}$ is irreducible;
			\item $E$ is semi-stable at all $\mfq |p$ and $p| v_\mfq(\Delta_\mfq)$ for all $\mfq |p$.
		\end{enumerate}
		Then there exists a Hilbert modular newform $f$ of parallel weight $2$, level $\mfn_p$ and some prime $\lambda$ of $\Q_f$ such that $\lambda | p$ and $\bar{\rho}_{E,p} \sim \bar{\rho}_{f,\lambda}$.
	\end{thm}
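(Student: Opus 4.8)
The plan is to derive the statement from the established level-lowering machinery for Hilbert modular forms over totally real fields (this is essentially \cite[Theorem 7]{FS15}); the elliptic-curve hypotheses serve only to feed the correct local data into that machinery. First I would use hypothesis (2): since $E/K$ is modular, there is a Hilbert modular newform $g$ of parallel weight $2$ and level $\mfn$ (the conductor of $E$) with $\bar{\rho}_{E,p} \sim \bar{\rho}_{g,\varpi_0}$ for some prime $\varpi_0 \mid p$ of $\Q_g$; irreducibility of $\bar{\rho}_{E,p}$ guarantees that every Hilbert newform produced along the way is genuinely cuspidal (not Eisenstein), so that Ribet-style level lowering is available.

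Next I would carry out the local analysis at the primes dividing $\mfm_p$. Fix $\mfq \mid \mfm_p$, so $\mfq \parallel \mfn$ and $p \mid v_\mfq(\Delta_\mfq)$; then $E$ has multiplicative reduction at $\mfq$ and, over $K_\mfq$, is a Tate curve with parameter $q_\mfq$ satisfying $v_\mfq(q_\mfq) = v_\mfq(\Delta_\mfq)$. Because $I_\mfq$ acts on $E[p]$ through $p$-th roots of $q_\mfq$, it acts trivially precisely when $p \mid v_\mfq(q_\mfq)$, so $\bar{\rho}_{E,p}$ is unramified at each $\mfq \mid \mfm_p$. I would combine this with hypothesis (3): at a prime $\mfq \mid p$ of good reduction $E[p]$ already extends to a finite flat group scheme over $\mcO_{K_\mfq}$, and at a prime $\mfq \mid p$ of multiplicative reduction the Tate-curve argument gives the same conclusion (using $p \mid v_\mfq(\Delta_\mfq)$); in particular (3) forces $\mfn_p$ to be coprime to $p$. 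Hypothesis (1), i.e.\ $e(\mfq/p) < p-1$ for all $\mfq \mid p$ together with $\Q(\zeta_p)^+ \nsubseteq K$, is exactly what is needed for the finite-flat-group-scheme theory (Raynaud, Fontaine--Laffaille) and the modularity-lifting inputs at $p$ to apply without degeneracies.

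With the local picture established, I would invoke the iterated level-lowering theorems for Hilbert modular forms (Fujiwara, Jarvis, Rajaei away from $p$, and Skinner--Wiles and Kisin-type modularity lifting at the primes above $p$): using irreducibility of $\bar{\rho}_{E,p}$ and the flat/unramified local behaviour just verified, one strips the factors of $\mfm_p$ from the level one prime at a time. This produces a Hilbert modular newform $f$ of parallel weight $2$ and level $\mfn_p = \mfn/\mfm_p$ together with a prime $\varpi \mid p$ of $\Q_f$ with $\bar{\rho}_{E,p} \sim \bar{\rho}_{f,\varpi}$, as desired.

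The hard part lies entirely in the deep input: the level-lowering and modularity-lifting machinery over a general totally real $K$, and especially the passage from modularity at a level divisible by primes above $p$ to modularity at a level prime to $p$ while controlling the mod-$p$ representation -- this is where hypothesis (1) is genuinely used. By contrast the elliptic-curve-specific steps (Tate-curve theory at the multiplicative primes and semistability at $p$) are routine and only serve to check the local hypotheses of those general theorems.
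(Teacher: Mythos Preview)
Your sketch is a reasonable outline of the argument behind \cite[Theorem~7]{FS15}, and you correctly identify the statement as that theorem. Note, however, that the paper does not supply its own proof here: it simply quotes the result from \cite{FS15} as a black box. So there is nothing to compare beyond observing that your proposal unpacks the machinery (Tate-curve description at multiplicative primes, finite-flat group schemes at primes above $p$, and the Fujiwara--Jarvis--Rajaei level-lowering input) that \cite{FS15} itself assembles, whereas the present paper is content to cite the packaged statement.
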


	\section{Solutions of the Diophantine equations $x^p+y^p=2^rz^p$ over totally real fields $K$}
	\label{notations section for x^p+y^p=2^rz^p}
	In this section, we study the solutions of the following equation over $K$.
	\begin{equation}
		\label{x^p +y^p = 2^rz^p}
		x^p +y^p = 2^rz^p
	\end{equation}  
	with prime exponent $p >5$ and $r\in \N $.	
	\begin{dfn}[Trivial solution]
		We say a solution $(a, b, c)\in  \mcO_K^3$ to the equation~\eqref{x^p +y^p = 2^rz^p} of exponent $p$ is trivial, if $abc=0$, or $(a, b, c)\in \{(1,1,1), (-1,-1,-1)\}$ for $r=1$, otherwise non-trivial. Further, we call it as primitive if $a,b,c$ are pairwise co-prime.
	\end{dfn} 	
    For any $S \subseteq P := \mrm{Spec}(\mcO_K)$, let $\mcO_{S}:=\{\alpha \in K : v_\mfP(\alpha)\geq 0 \text{ for all } \mfP \in P \setminus S\}$ denote the ring of $S$-integers in $K$ and 
    $\mcO_{S}^*$ denote the units of $\mcO_{S}$. We refer to them
    as $S$-units. Let $S_K:=\{ \mfP \in P : \mfP|2 \}$
	%    $T_K:=\{ \mfP \in S_K :\f(\mfP, 2)=1 \}$, where $\f(\mfP, 2)$ denotes 
	%    the inertial degree of $\mfP$ over $2$, 
	and $U_K:=\{ \mfP \in S_K: (3, v_\mfP(2))=1 \}$.
%		Let $W_K$ be the set of $(a, b, c)\in  \mcO_K^3$ satisfying the equation~\eqref{x^p +y^p = 2^rz^p} of exponent $p$ with $\mfP |abc$ for every $\mfP \in S_K$.
	
\begin{dfn}
	\label{definition for W_K}
	  Let $W_K$ be the set of $(a, b, c)\in  \mcO_K^3$ satisfying the equation~\eqref{x^p +y^p = 2^rz^p} of exponent $p$ with $\mfP |abc$ for every $\mfP \in S_K$.
\end{dfn}

	\subsection{Main results}
	\label{section for main result}

	%\begin{dfn}
	%           Let $W_K$ be the set of $(a, b, c)\in  \mcO_K^3$ is a solution of~\eqref{x^p +y^p = 2^rz^p} with $\mfP |abc$ for every $\mfP \in T_K$.
	%\end{dfn}         
    We now state the main results of this article.
	\begin{thm}
	\label{main result for x^p+y^p=2^rz^p}
		Let $K$ be a totally real field. Suppose, for every solution $(\lambda, \mu)$ to the $S_K$-unit equation
		\begin{equation}
			\label{S_K-unit solution}
			\lambda+\mu=1, \ \lambda, \mu \in \mcO_{S_K}^\ast,
		\end{equation}
		there exists some $\mfP \in S_K$ that satisfies
		\begin{equation}
			\label{assumption for main result x^p+y^p=2^rz^p}
			\max \left\{|v_\mfP(\lambda)|,|v_\mfP(\mu)| \right\}\leq 4v_\mfP(2).
		\end{equation}
		Then the equation~\eqref{x^p +y^p = 2^rz^p} of exponent $p$ has no asymptotic solution in $W_K$, i.e., there exists a constant $B_{K,r}$ (depends on $K,r$) such that for primes $p>B_{K,r}$, the equation~\eqref{x^p +y^p = 2^rz^p} of exponent $p$ has no non-trivial primitive solutions in $W_K$.
	\end{thm}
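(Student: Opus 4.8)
The plan is to follow the Frey-curve method of \cite{FS15} and \cite{IKO20}. Given a non-trivial solution $(a,b,c)\in W_K$ of \eqref{x^p +y^p = 2^rz^p} with $p$ large, I would first rescale and, if needed, twist by an ideal class of $K$ so that $a,b,c\in\mcO_K$ and the solution is primitive at every $\mfP\in S_K$ --- whence for $p$ large $\mfP\mid abc$ forces exactly one of $a,b,c$ to be divisible by $\mfP$ --- and attach the Frey curve
\[
E=E_{a,b,c}\colon\quad Y^2=X(X-a^p)(X+b^p),
\]
with $c_4=16(a^{2p}+a^pb^p+b^{2p})$ and $\Delta_E=2^{4+2r}(abc)^{2p}$. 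A local analysis should show: $E$ is semi-stable at every $\mfq\notin S_K$ (in particular at every $\mfq\mid p$, since $p>2$), with $p\mid v_\mfq(\Delta_\mfq)$ whenever $\mfq\mid abc$ and $\mfq\notin S_K$; while at each $\mfP\in S_K$ one has $v_\mfP(c_4)=4v_\mfP(2)$, and a minimal model has multiplicative reduction at $\mfP$ with conductor exponent $1$ and $v_\mfP(\Delta_\mfP)=(2r-8)v_\mfP(2)+2p\,v_\mfP(\star)$, $\star\in\{a,b,c\}$ being divisible by $\mfP$. Hence $\mathrm{cond}(E)=\bigl(\prod_{\mfP\in S_K}\mfP\bigr)\prod_{\mfq\mid abc,\ \mfq\notin S_K}\mfq$, a squarefree ideal.

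Next I would invoke the standard modularity/level-lowering package. For $p$ above a constant depending only on $K$: $E$ is modular (automatic for real quadratic $K$ by Theorem~\ref{modularity result of elliptic curve over totally real}, and in general since a Frey curve of a non-trivial solution avoids the finite list of non-modular $\bar K$-classes, as in \cite{FS15}); $\bar{\rho}_{E,p}$ is irreducible by Theorem~\ref{irreducibility of mod $P$ representation} (over the Galois closure of $K$ if needed), as $E$ is semi-stable at all $\mfq\mid p$; and $e(\mfq/p)<p-1$ for all $\mfq\mid p$ while $\Q(\zeta_p)^+\nsubseteq K$. Theorem~\ref{level lowering of mod $p$ repr} then produces a Hilbert modular newform $f/K$ of parallel weight $2$ and level $\mfn_p=\mathrm{cond}(E)/\mfm_p$ with $\bar{\rho}_{E,p}\sim\bar{\rho}_{f,\varpi}$, $\varpi\mid p$. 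By the reduction data above, every $\mfq\mid abc$ outside $S_K$ is removed, and $\mfP\in S_K$ is removed exactly when $p\mid(2r-8)v_\mfP(2)$, i.e. (for $p$ large) when $r=4$. So $\mfn_p=\prod_{\mfP\in S_K}\mfP$ if $r\neq4$ and $\mfn_p=(1)$ if $r=4$; in both cases $\mfn_p$ depends only on $K$ and $r$, so $f$ lies in a finite set.

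The final step is to derive a contradiction for $p$ large. When $r\neq4$, $\mfn_p$ is squarefree with a prime occurring to the first power, so Theorem~\ref{partial result to ES conj} attaches to each such $f$ with rational eigenvalues an elliptic curve $E_f/K$ of conductor $\mfn_p$ with $\bar{\rho}_{E,p}\sim\bar{\rho}_{E_f,p}$ --- finitely many $E_f$ altogether; newforms with irrational eigenvalues are excluded for $p$ large by comparing Hecke eigenvalues at a fixed auxiliary prime with the rational Frobenius traces of $E$ (the case $r=4$, $\mfn_p=(1)$, is treated the same way against the finitely many weight-$2$ level-$(1)$ Hilbert newforms). If $\operatorname{rad}(abc)$ is not supported on $S_K$, I would compare $\bar{\rho}_{E,p}$ with $\bar{\rho}_{E_f,p}$ using Frobenius traces at a fixed finite set of auxiliary primes (over which the reductions of all Frey curves form explicit finite sets), images of inertia at the primes of $S_K$, Hasse's bound, and the finiteness of the list of $E_f$, to bound $p$ --- except for finitely many degenerate $E_f$. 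If instead $\operatorname{rad}(abc)$ is supported on $S_K$, then $a,b,c\in\mcO_{S_K}^\ast$ and $(\lambda,\mu):=\bigl(a^p/(2^rc^p),\,b^p/(2^rc^p)\bigr)$ solves \eqref{S_K-unit solution}; hypothesis \eqref{assumption for main result x^p+y^p=2^rz^p} gives $\mfP\in S_K$ with $\max(|v_\mfP(\lambda)|,|v_\mfP(\mu)|)\le 4v_\mfP(2)$, and since $v_\mfP(\lambda)\equiv v_\mfP(\mu)\equiv -r\,v_\mfP(2)\pmod p$, for $p>(r+4)\max_{\mfP\in S_K}v_\mfP(2)$ this forces $v_\mfP(a)=v_\mfP(b)=v_\mfP(c)$, hence $=0$ by primitivity, contradicting $\mfP\mid abc$. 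Taking $B_{K,r}$ above all the bounds so produced finishes the argument.

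The hard part will be the last step: extracting a bound on $p$ that is uniform over all solutions from the isomorphism of $\bar{\rho}_{E,p}$ with one of the finitely many newforms/curves of the fixed level $\mfn_p$. This is precisely where the constant $4$ in \eqref{assumption for main result x^p+y^p=2^rz^p} is calibrated --- it is forced by $v_\mfP(c_4)=4v_\mfP(2)$ and the resulting form of $v_\mfP(j(E))$ at $\mfP\in S_K$ --- and where the degenerate solutions (those whose support lies in $S_K$, equivalently those giving genuine $S_K$-unit solutions of $\lambda+\mu=1$) must be removed separately via the hypothesis.
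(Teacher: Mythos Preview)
Your setup through level lowering is broadly on the right track, but the final step --- where the $S_K$-unit hypothesis enters --- is misplaced, and this is a genuine gap.

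You split according to whether $\operatorname{rad}(abc)$ is supported on $S_K$. In your Case~2 (support in $S_K$) you form $(\lambda,\mu)=(a^p/2^rc^p,\,b^p/2^rc^p)$ and apply \eqref{assumption for main result x^p+y^p=2^rz^p}; that case is fine. But Case~1 is the generic situation, and there your plan --- Frobenius-trace comparison at a fixed finite set of auxiliary primes $\mfq$, plus images of inertia --- cannot eliminate all the rational newforms $E_f$: at a prime $\mfq$ of common good reduction the congruence $a_\mfq(E)\equiv a_\mfq(E_f)\pmod p$ may hold as an equality (both sides rational integers bounded by Hasse), yielding no constraint on $p$. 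You acknowledge this with ``except for finitely many degenerate $E_f$'' but give no mechanism to dispose of those survivors; and hypothesis \eqref{assumption for main result x^p+y^p=2^rz^p} is never used in this case, so it is unclear why it is even needed.

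The paper's key move is to apply the $S_K$-unit hypothesis not to the original triple $(a,b,c)$ but to the \emph{level-lowered} curve. Since the Frey curve has full $2$-torsion, one can (after an isogeny) take $E'$ with full $2$-torsion as well; writing $E'$ in Legendre form produces a $\lambda$-invariant, and $(\lambda,1-\lambda)$ is automatically an $S_K$-unit solution of $\lambda+\mu=1$ because $E'$ has good reduction outside $S_K\cup\{\mfm\}$ --- \emph{regardless of where the support of $abc$ lies}. Hypothesis \eqref{assumption for main result x^p+y^p=2^rz^p} applied to this $(\lambda,\mu)$, together with the identity $j_{E'}=2^8(1-\lambda\mu)^3/(\lambda\mu)^2$, forces $v_\mfP(j_{E'})\ge 0$ at some $\mfP\in S_K$. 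On the other hand, the inertia condition $p\mid\#\bar\rho_{E,p}(I_\mfP)$ transfers to $E'$ via $\bar\rho_{E,p}\sim\bar\rho_{E',p}$ and forces $v_\mfP(j_{E'})<0$ for \emph{every} $\mfP\in S_K$. This eliminates all $E'$ at once, with no case split on $\operatorname{rad}(abc)$ and no auxiliary-prime elimination. The constant $4$ in \eqref{assumption for main result x^p+y^p=2^rz^p} is calibrated to $v_\mfP(j_{E'})$ via the $2^8$ in the Legendre $j$-formula, not to $v_\mfP(c_4(E))$ as you suggest.

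A secondary point: your assertion that the conductor exponent of $E$ at each $\mfP\in S_K$ is exactly $1$ is not justified (running Tate's algorithm at primes above $2$ is delicate) and not needed; the paper only bounds it by $2+6v_\mfP(2)$, so $\mfn_p$ ranges over a finite set of levels rather than the single ideal $\prod_{\mfP\in S_K}\mfP$.
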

%      Note that $(1,1,1), (-1,-1,-1) \notin W_K$. Hence the only trivial solutions in $W_K$ are of the form $(a, b, c)\in  \mcO_K^3$ satisfying~\eqref{x^p +y^p = 2^rz^p} with $abc=0$.
	We write $(ES)$ for ``either $[K: \Q]\equiv 1 \pmod 2$ or Conjecture \ref{ES conj} holds for $K$."
	\begin{thm}
	\label{main result1 for x^p+y^p=2^rz^p}
		Let $K$ be a totally real field satisfying the condition $(ES)$. 
		Suppose, for every solution $(\lambda, \mu)$ to the $S_K$-unit equation~\eqref{S_K-unit solution}
		%    	$$	\lambda+\mu=1, \ \lambda, \mu \in \mcO_{S}^\ast,$$
		there exists some $\mfP \in U_K$ that satisfies
		\begin{equation}
			\label{assumption for main result1 x^p+y^p=2^rz^p}
			\max \left\{|v_\mfP(\lambda)|,|v_\mfP(\mu)| \right\}\leq 4v_\mfP(2) \text{ and } v_\mfP(\lambda\mu)\equiv v_\mfP(2) \pmod 3.
		\end{equation}
		Then the equation~\eqref{x^p +y^p = 2^rz^p} of exponent $p$ with $r=2,3$ has no asymptotic solution in $\mcO_K^3$, i.e., there exists a constant $B_K$ (depends on $K$) such that for primes $p>B_K$, the equation~\eqref{x^p +y^p = 2^rz^p} of exponent $p$ with $r=2,3$ has no non-trivial primitive solutions in $\mcO_K^3$.
	\end{thm}
	%	\begin{thm}
		%		\label{main result2 for x^p+y^p=2^rz^p}
		%		Let $K$ be a totally real field satisfying the condition $(ES)$ and let $r=1$. Suppose, for every $(\lambda, \mu)$ to the $S_K$-unit solution of \eqref{S_K-unit solution}
		%		%    	$$	\lambda+\mu=1, \ \lambda, \mu \in \mcO_{S}^\ast,$$
		%		there exists some $\mfP \in U_K$ that satisfies
		%		\begin{equation}
			%			\label{assumption for main result2 x^p+y^p=2^rz^p}
			%			\max \left(|v_\mfP(\lambda)|,|v_\mfP(\mu)| \right )\leq 4v_\mfP(2) \text{ and } v_\mfP(\lambda\mu)\equiv 0 \text{ or }3v_\mfP(2) \pmod 6.
			%		\end{equation}
		%		Then there exists a constant $B_K$, depends on $K$, such that for all $p>B_K$,~\eqref{x^p +y^p = 2^rz^p} has no non-trivial solution in $K$.
		%	\end{thm}
	
	\begin{cor}
		\label{cor to main result1 for x^p+y^p=2^rz^p}
		Let $K$ and $r$ be as in Theorem~\ref{main result1 for x^p+y^p=2^rz^p}.
		Then the conclusion of Theorem~\ref{main result1 for x^p+y^p=2^rz^p}
		remains valid even if we replace both the assumptions in~\eqref{assumption for main result1 x^p+y^p=2^rz^p} by
		\begin{equation}
			\label{assumption for cor to main result1 x^p+y^p=2^rz^p}
			\max \left\{|v_\mfP(\lambda)|,|v_\mfP(\mu)| \right\}= v_\mfP(2).
		\end{equation}
	\end{cor}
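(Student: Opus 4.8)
The plan is to observe that the single hypothesis~\eqref{assumption for cor to main result1 x^p+y^p=2^rz^p} is strictly stronger than the pair of hypotheses~\eqref{assumption for main result1 x^p+y^p=2^rz^p}, so that the corollary follows at once from Theorem~\ref{main result1 for x^p+y^p=2^rz^p}. Concretely, I would fix a solution $(\lambda,\mu)$ to the $S_K$-unit equation~\eqref{S_K-unit solution} together with a prime $\mfP \in U_K$ for which $\max\!\left(|v_\mfP(\lambda)|,|v_\mfP(\mu)|\right) = v_\mfP(2)$, and then check that this very $\mfP$ satisfies both conditions in~\eqref{assumption for main result1 x^p+y^p=2^rz^p}. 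The inequality $\max\!\left(|v_\mfP(\lambda)|,|v_\mfP(\mu)|\right) \le 4 v_\mfP(2)$ is trivial because $v_\mfP(2) \ge 1$ for $\mfP \mid 2$. The only point with any content is the congruence $v_\mfP(\lambda\mu) \equiv v_\mfP(2) \pmod 3$.

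For that I would run the standard ultrametric case split on the relation $\lambda+\mu=1$ at $\mfP$. Write $a = v_\mfP(\lambda)$, $b = v_\mfP(\mu)$ and $n = v_\mfP(2) \ge 1$. If $a \ne b$, then $\min(a,b) = v_\mfP(\lambda+\mu) = v_\mfP(1) = 0$, which forces both $a,b \ge 0$ with exactly one of them equal to $0$ and the other strictly positive; the hypothesis $\max(|a|,|b|) = n$ then forces that positive one to equal $n$, so $v_\mfP(\lambda\mu) = a+b = n = v_\mfP(2)$ and the congruence holds trivially. If instead $a = b$, then $0 = v_\mfP(\lambda+\mu) \ge \min(a,b) = a$ gives $a \le 0$, and $\max(|a|,|b|) = -a = n$ forces $a = b = -n$; hence $v_\mfP(\lambda\mu) = 2a = -2n$, and since $-2n - n = -3n \equiv 0 \pmod 3$ we obtain $v_\mfP(\lambda\mu) \equiv n = v_\mfP(2) \pmod 3$.

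Having verified that the chosen $\mfP \in U_K$ meets both requirements of~\eqref{assumption for main result1 x^p+y^p=2^rz^p}, the hypothesis of Theorem~\ref{main result1 for x^p+y^p=2^rz^p} is satisfied, and that theorem then supplies the constant $B_K$ and the desired conclusion that~\eqref{x^p +y^p = 2^rz^p} has no non-trivial solution in $K$ for $p > B_K$. I do not expect any genuine obstacle: this is purely elementary valuation bookkeeping. The one spot that deserves care in the write-up is the case $a = b < 0$, where the congruence survives precisely because $-2 \equiv 1 \pmod 3$; the same replacement of hypotheses would not be available modulo any other integer, which is why the exponent $3$ (and hence the restriction $r = 2,3$) is essential to the statement.
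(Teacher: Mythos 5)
Your proof is correct and follows essentially the same route as the paper: both reduce the corollary to checking that $\max\left(|v_\mfP(\lambda)|,|v_\mfP(\mu)|\right)=v_\mfP(2)$ implies the two hypotheses of Theorem~\ref{main result1 for x^p+y^p=2^rz^p}, with the key point being the elementary valuation computation showing $v_\mfP(\lambda\mu)\in\{-2t,t\}$ and hence $v_\mfP(\lambda\mu)\equiv t\pmod 3$. Your write-up merely spells out the ultrametric case split that the paper labels ``an easy computation,'' and your closing remark about why the modulus $3$ is essential is a nice observation but not needed for the argument.
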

	
	\begin{proof}[Proof of Corollary~\ref{cor to main result1 for x^p+y^p=2^rz^p}]
		Let $\mfP \in U_K$ and $t:=\max \left\{|v_\mfP(\lambda)|,|v_\mfP(\mu)| \right\}>0$. Since $\lambda +\mu =1$, we get $v_\mfP(\lambda\mu)= -2t$ or $t$. Therefore, $v_\mfP(\lambda\mu) \equiv t \pmod 3$. By~\eqref{assumption for cor to main result1 x^p+y^p=2^rz^p}, we get $v_\mfP(\lambda\mu) \equiv  v_\mfP(2) \pmod 3$. Now, the proof follows from Theorem~\ref{main result1 for x^p+y^p=2^rz^p}. 
	\end{proof}

\section{Steps to prove Theorem~\ref{main result for x^p+y^p=2^rz^p} and Theorem~\ref{main result1 for x^p+y^p=2^rz^p}}
	\label{section for proof of main results x^p+y^p=2^rz^p}
	For any non-trivial primitive solution $(a, b, c) \in \mcO_K^3$ to the equation~\eqref{x^p +y^p = 2^rz^p} of exponent $p>5$, consider the Frey curve
	\begin{equation}
		\label{Frey curve for x^p +y^p = 2^rz^p}
		E=E_{a,b,c} : Y^2 = X(X-a^p)(X+b^p).
	\end{equation}
	with $c_4=2^4(a^{2p}+2^rb^pc^p),\ \Delta_E=2^{4+2r}(abc)^{2p}$ and $j_E=2^{8-2r} \frac{(a^{2p}+2^rb^pc^p)^3}{(abc)^{2p}},$ where $j_E$ (resp., $\Delta_E$) denote the $j$-invariant (resp., discriminant) of $E$.
	
	\subsection{Modularity of the Frey curve}
	We will now prove the modularity of the Frey curve $E$ given by \eqref{Frey curve for x^p +y^p = 2^rz^p} over $K$.
	
	\begin{thm}
		\label{modularity of Frey curve x^p+y^p=2^rz^p over K}
		Suppose $(a,b,c)\in \mcO_K^3$ be a non-trivial primitive solution to the equation~$\eqref{x^p +y^p = 2^rz^p}$ of exponent $p$. Let $E/K$ be the Frey curve attached to $(a,b,c)$(cf.~\eqref{Frey curve for x^p +y^p = 2^rz^p}). Then there exists a constant $A_{K,r}$ (depends on $K,r$) such that for primes $p >A_{K,r}$, $E/K$ is modular.  
	\end{thm}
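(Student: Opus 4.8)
The plan is to derive this from the finiteness statement of Theorem~\ref{modularity result of elliptic curve over totally real} together with an analysis of the $\lambda$-invariant of the Frey curve, following the strategy used in \cite{FS15}. By Theorem~\ref{modularity result of elliptic curve over totally real} there is a finite set $J = \{j_1, \dots, j_n\} \subset K$, depending only on $K$, such that every elliptic curve over $K$ with $j$-invariant not in $J$ is modular. Hence it suffices to produce a constant $A_K$, depending only on $K$, so that for all primes $p > A_K$ the $j$-invariant $j_E$ of the Frey curve~\eqref{Frey curve for x^p +y^p = 2^rz^p} attached to a non-trivial solution $(a,b,c)$ of~\eqref{x^p +y^p = 2^rz^p} cannot lie in $J$.

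The next step is to pass to the Legendre model. The $2$-torsion subgroup of $E = E_{a,b,c}$ is $\{0, a^p, -b^p\}$, so $E$ is isomorphic over $\overline{K}$ to $Y^2 = X(X-1)(X-\lambda)$ with $\lambda = -b^p/a^p \in K^\ast$ (for a non-trivial solution $a, b \neq 0$ and $a \neq \pm b$), and $j_E = j(\lambda)$ where $j(T) = 256\,(T^2 - T + 1)^3 / (T^2(T-1)^2)$; this is consistent with the displayed formula for $j_E$ upon using $a^p + b^p = 2^r c^p$. Since $T \mapsto j(T)$ has degree $6$, each fibre $j^{-1}(j_i)$ is finite, so $R := K \cap \bigcup_{i=1}^{n} j^{-1}(j_i)$ is a finite subset of $K^\ast \setminus \{1\}$ depending only on $K$. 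Therefore, if $j_E \in J$ then, setting $\beta := b/a \in K^\ast$, we get $\beta^p = -\lambda$ for some $\lambda \in R$.

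Finally I would rule out $\beta^p = -\lambda$ with $\lambda \in R$ for large $p$. For each finite prime $\mfq$ of $K$ we have $p\, v_\mfq(\beta) = v_\mfq(\lambda)$, so $p \mid v_\mfq(\lambda)$; as $R$ is finite, once $p$ exceeds $M_1 := \max\{|v_\mfq(\lambda)| : \lambda \in R,\ \mfq \text{ finite}\}$ this forces $v_\mfq(\lambda) = 0$ for all $\mfq$, hence $\lambda \in \mcO_K^\ast$ and $\beta \in \mcO_K^\ast$. Expanding $-\lambda$ and $\beta$ in a fixed fundamental system of units of the totally real field $K$, the equation $\beta^p = -\lambda$ forces $p$ to divide every unit-exponent of $-\lambda$; these exponents are bounded (again because $R$ is finite) by some $M_2$, so for $p > M_2$ the element $-\lambda$ is a torsion unit, i.e.\ $-\lambda = \pm 1$. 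Now $-\lambda = 1$ is impossible since $1 \notin R$, while $-\lambda = -1$ gives $\beta^p = 1$, hence $\beta = \pm 1$ because $K$ is real, i.e.\ $a = \pm b$, contradicting non-triviality. Thus $A_K := \max(M_1, M_2, 2)$ works. The argument is fairly soft once Theorem~\ref{modularity result of elliptic curve over totally real} is available; the points needing care are the verification that $R, M_1, M_2$ depend only on $K$ (in particular not on $r$), and the observation that $A_K$ is in general ineffective, since the finiteness in Theorem~\ref{modularity result of elliptic curve over totally real} is not known to be effective.
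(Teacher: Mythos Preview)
Your argument is correct and follows essentially the same route as the paper's proof: reduce via Theorem~\ref{modularity result of elliptic curve over totally real} to a finite list of possible $\lambda$-values, then argue that $\beta^p=-\lambda$ with $\lambda$ in a fixed finite set forces $p$ to be bounded unless $\beta=\pm 1$. Your valuation-and-unit-exponent bound is in fact more explicit than the paper's version (which invokes both $(b/a)^p=-\lambda_k$ and $(c/a)^p=(1-\lambda_k)/2^r$ and argues more informally that these ``determine $p$''), and it makes the $r$-independence of $A_K$ transparent. One small slip in your final case split: you have the two cases interchanged---it is $-\lambda=-1$ (i.e.\ $\lambda=1$) that is ruled out by $1\notin R$, while $-\lambda=1$ gives $\beta^p=1$ and hence $\beta=1$; either way one ends at $a=\pm b$, so the conclusion stands.
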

	
	\begin{proof}
		%	Suppose $(a,b,c)\in \mcO^3$ is a non-trivial solution of \eqref{x^p +y^p = 2^rz^p}. Then the Frey curve $E:=E_{a,b,c}$ be as in~\eqref{Frey curve for x^p +y^p = 2^rz^p}.
		Since $j_E= 2^{8-2r}\frac{(a^{2p}+b^{2p}+a^pb^p)^3}{(abc)^{2p}}$, $j(\lambda)=2^8\frac{(\lambda^2-\lambda+1)^3}{\lambda^2(\lambda-1)^2}$ for $\lambda= -\frac{b^p}{a^p}$.	
		By Theorem~\ref{modularity result of elliptic curve over totally real}, there exists finitely many $\bar{K}$-isomorphism classes of non-modular elliptic curves over $K$. 
		%	Consider the $j$-invariants of these elliptic curves to be $j_1,...,j_t \in K$.
		Let $j_1,...,j_t \in K$ be the $j$-invariants of those elliptic curves.  
		
		For each $i=1, 2, \ldots, t$, the equation $j(\lambda)=j_i$ has at most six solutions in $K$. Hence, there exists 
		$\lambda_1, \lambda_2, ..., \lambda_m \in K$ with $m\leq 6t$ such that $E$ is modular for all $\lambda \notin\{\lambda_1, \lambda_2, ..., \lambda_m\}$.
		If $\lambda= \lambda_k$ for some $k \in \{1, 2, \ldots, m \}$, then $\left(\frac{b}{a} \right)^p=-\lambda_k$ and $\left(\frac{c}{a} \right)^p=\frac{1-\lambda_k}{2^r}.$
		The above two equations determine $p$ uniquely and denote it by $p_k$. Suppose $p \neq q$ are primes such that $\left(\frac{b}{a} \right)^p=\left(\frac{b}{a} \right)^q$, which means $\left(\frac{b}{a}\right)$ is a root of unity. Similarly,
		$\left(\frac{c}{a}\right)$ is also a root of unity.	We get $b=\pm a$, $c=\pm a$, as $K$ is totally real. Since $(a,b,c)\in \mcO_K^3$ is primitive, this gives $a=\pm1, b=\pm1, c=\pm1$ and hence the solution $(a,b,c)$ is trivial. 
		%	 This means that~\eqref{x^p +y^p = 2^rz^p} has a solution of the form $(\pm1, \pm1, \pm1)$, which is a trivial solution. 
		The proof of Theorem~\ref{modularity of Frey curve x^p+y^p=2^rz^p over K} follows by taking $A_{K,r}=\max \{p_1,...,p_m\}$.
		%	Now, take $A_K$ to be maximum of $p_k$ with $(k=1, \ldots, m)$. This completes the proof.
	\end{proof}	
	%	\begin{cor}
		%		\label{modularity of Frey curve of signature $(p,p,p)$}
		%		Let $K$ be a totally real field. There is some constant $A_K$, depends on $K$, such that for any non-trivial solution $(a,b,c)$ of the Fermat equation \eqref{(p,p,p)} of exponent $p> A_K$, the Frey curve $E_{a,b,c}$ given by \eqref{Frey curve $(p,p,p)$} is modular.
		%	\end{cor}

	\subsection{Reduction type}
		In this section, we describe the reduction of the Frey curve $E:= E_{a,b,c}$ attached to a non-trivial primitive solution $(a,b,c)$ to the equation~\eqref{x^p +y^p = 2^rz^p} at $\mfq \in P$.

	\subsubsection{Reduction type at odd primes}
	The following lemma characterizes the type of reduction of the Frey curve $E$ 
	at primes $\mfq$ away from $S_K$.
	\begin{lem}
		\label{reduction away from S}
		Suppose $(a,b,c) \in \mcO_K^3$ be a non-trivial primitive solution to the equation~\eqref{x^p +y^p = 2^rz^p} of exponent $p$. Let $E := E_{a,b,c}$ be the associated Frey curve as in \eqref{Frey curve for x^p +y^p = 2^rz^p}. Then at all primes $\mfq \in P$ away from $S_K$, $E$ is minimal, semi-stable and satisfies $p | v_\mfq(\Delta_E)$. Let $\mfn$ be the conductor of $E$ and $\mfn_p$ be as in \eqref{conductor of elliptic curve}. Then
		\begin{equation}
			\label{conductor for Frey curve x^p +y^p = 2^rz^p}
			\mfn=\prod_{\mfP \in S_K}\mfP^{r_\mfP} \prod_{\mfq|abc,\ \mfq \in P \setminus S_K}\mfq,\ \mfn_p=\prod_{\mfP \in S_K}\mfP^{r_\mfP'},
		\end{equation}
		where $0\leq r_\mfP' \leq r_\mfP \leq 2+6v_\mfP(2)$. 
	\end{lem}
	
	\begin{proof}
		%	For a non-trivial solution $(a,b,c)$ of ~\eqref{x^p +y^p = 2^rz^p} of exponent $p$, consider the Frey curve $E$ given by \eqref{Frey curve for x^p +y^p = 2^rz^p}. 
		%	Then $\Delta_E=2^{4+2r}(abc)^{2p}, \ c_4=2^4(a^{2p}+2^rb^pc^p)$ and $ j_E=2^{8-2r} \frac{(a^{2p}+2^rb^pc^p)^3}{(abc)^{2p}}.$ 
		Let $\mfq \in P \setminus S_K$. If $\mfq \nmid \Delta_E$, then $E$ has good reduction at $\mfq$. Recall that $\Delta_E=2^{4+2r}(abc)^{2p}$.
		If $\mfq|\Delta_E$, then $\mfq | abc$. Since $(a,b,c)$ is primitive solution to the equation~\eqref{x^p +y^p = 2^rz^p} and $\mfq \nmid 2$, $\mfq$ divides precisely one of $a$, $b$ and $c$. 
		Recall that $c_4=2^4(a^{2p}+2^rb^pc^p)$, so $v_\mfq(c_4)=0$. Hence, $E$ is minimal and has multiplicative reduction at $\mfq$. Therefore, $E$ is semi-stable at $\mfq$. Since $v_\mfq(\Delta_E)=2p \left( v_\mfq(abc) \right)$, $p | v_\mfq(\Delta_E)$. 
		By~\eqref{conductor of elliptic curve}, we get $\mfq \nmid \mfn_p$ for all  $\mfq \in P \setminus S_K$.
		For $\mfP \in S_K$, $r_\mfP=v_\mfP(\mfn)\leq 2+6v_\mfP(2)$ (cf. \cite[Chapter IV, Theorem 10.4]{S94}).
		%		{\color{red}  and $E/K$ has full $2$-torsion point, the wild part of the conductor $E$ at $\mfm$ vanishes (cf. \cite[Page 380]{S94} or,  \cite[proof of Lemma 3.3]{FS15} for more details). Hence, $r_\mfm=v_\mfm(N)\leq 2$.}
	\end{proof}

	\subsubsection{Type of the reduction based on image of inertia}
    We now recall~\cite[Lemma 3.4]{FS15}, which is useful for determining the type of reduction of the Frey curve $E$ based on the image of inertia for $\bar{\rho}_{E,p}$.

 	\begin{lem}
		\label{criteria for potentially multiplicative reduction}
		Let $E/K$ be an elliptic curve. Let $p > 5$ be a prime and $\mfq \nmid p$ be a prime of $K$. Then $E$ has potentially multiplicative reduction at $\mfq$ i.e., $\ v_\mfq(j_E) < 0$ and $p \nmid v_\mfq(j_E)$ if and only if $p | \# \bar{\rho}_{E,p}(I_\mfq)$.
	\end{lem}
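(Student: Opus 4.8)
The statement is local at $\mfq$, so the plan is to pass to the completion $K_\mfq$ (a finite extension of $\Q_\ell$, where $\ell$ is the rational prime below $\mfq$, and $\ell\neq p$ by hypothesis) and analyse the restriction of $\bar\rho_{E,p}$ to the inertia group $I_\mfq$. The decisive structural input is that, since $\mfq\nmid p$, the action of $I_\mfq$ on $E[p]$ is tame: it factors through a procyclic quotient and the mod-$p$ cyclotomic character is trivial on $I_\mfq$. I would then split according to the reduction of $E$ over $\overline{K_\mfq}$ --- potentially good, equivalent to $v_\mfq(j_E)\geq 0$, versus potentially multiplicative, equivalent to $v_\mfq(j_E)<0$ --- and in each case compute $\#\bar\rho_{E,p}(I_\mfq)$ and match it against the stated criterion.

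If $v_\mfq(j_E)\geq 0$ the equivalence is immediate: the right-hand condition $v_\mfq(j_E)<0$ fails, and on the other side $E$ acquires good reduction over some finite extension $L/K_\mfq$, so N\'eron--Ogg--Shafarevich forces $I_L$ to act trivially on $E[p]$; hence $\bar\rho_{E,p}(I_\mfq)$ is a quotient of the tame (cyclic) group $I_\mfq/I_L$. By the classical analysis of potentially good reduction (Serre; Kraus) this image is cyclic of order dividing $24$, hence coprime to $p$ since $p>5$, so $p\nmid\#\bar\rho_{E,p}(I_\mfq)$. Both sides of the equivalence are therefore false.

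The content is in the potentially multiplicative case $v_\mfq(j_E)<0$. By Tate's theory of elliptic curves over local fields there is a Tate parameter $q\in K_\mfq^\ast$ with $v_\mfq(q)=-v_\mfq(j_E)>0$ and a class $d\in K_\mfq^\ast/(K_\mfq^\ast)^2$ such that $E$ is, over $K_\mfq$, the quadratic twist by $d$ of the Tate curve $E_q$; thus $\bar\rho_{E,p}\cong\bar\rho_{E_q,p}\otimes\chi_d$ with $\chi_d$ of order dividing $2$. On $E_q[p]$, in the Tate basis $\{\zeta_p,\,q^{1/p}\}$, the Galois action has the form $\sigma\mapsto\psmat{\chi(\sigma)}{c(\sigma)}{0}{1}$ for the mod-$p$ cyclotomic character $\chi$ and a cocycle $c$. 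Restricting to $I_\mfq$ and using $\chi|_{I_\mfq}=1$, one gets $\bar\rho_{E_q,p}(I_\mfq)=\{\psmat{1}{c(\sigma)}{0}{1}:\sigma\in I_\mfq\}$, and $c|_{I_\mfq}$ is nontrivial exactly when $q^{1/p}\notin K_\mfq^{\mathrm{ur}}$, i.e.\ exactly when $p\nmid v_\mfq(q)=-v_\mfq(j_E)$, in which case the image has order precisely $p$. Since twisting by the order-$\leq 2$ character $\chi_d$ cannot affect divisibility of the order by the odd prime $p$, we conclude: if $p\nmid v_\mfq(j_E)$ then $\bar\rho_{E,p}(I_\mfq)$ contains an element of order $p$, so $p\mid\#\bar\rho_{E,p}(I_\mfq)$; if $p\mid v_\mfq(j_E)$ then $\#\bar\rho_{E,p}(I_\mfq)\in\{1,2\}$, so $p\nmid\#\bar\rho_{E,p}(I_\mfq)$. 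Combining with the previous paragraph establishes the equivalence.

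The main obstacle is exactly this inertia computation for the Tate curve and its twist: one must verify that the quadratic twist is harmless for divisibility by $p$ (true because $p$ is odd) and that the hypothesis $\mfq\nmid p$ is genuinely what collapses the cyclotomic part of the $I_\mfq$-action, so that $\bar\rho_{E_q,p}(I_\mfq)$ is the honest order-$p$ unipotent group. Everything else --- Tate uniformization, the bound on the semistability defect --- is standard and citable; there is no new idea to supply, only a careful bookkeeping of these local facts, as recorded in \cite{FS15}.
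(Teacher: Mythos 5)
Your proof is correct. Note that the paper does not prove this lemma at all --- it is recalled from \cite[Lemma 3.4]{FS15} --- and your argument (splitting on the sign of $v_\mfq(j_E)$, killing the potentially good case because the inertia image has order dividing $24$, and in the potentially multiplicative case computing the unipotent inertia image of the Tate curve and observing that a quadratic twist cannot affect divisibility by the odd prime $p$) is exactly the standard proof underlying that citation. One cosmetic slip: in residue characteristic $2$ or $3$ the inertia image in the potentially good case need not be cyclic (it can be quaternionic or a quotient of $\SL_2(\F_3)$), but its order still divides $24$, which is all your argument actually uses.
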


	The next lemma specifies the type of reduction of the Frey curve $E$ at primes $\mfq \nmid 2$.
	\begin{lem}
		\label{Type of reduction at q away from 2 and p x^p+y^p=2^rz^p}
		Let $(a,b,c)  \in \mcO_K^3$ be a non-trivial primitive solution to the equation~\eqref{x^p +y^p = 2^rz^p} of exponent $p > 5$ and let $E := E_{a,b,c}$ be the associated Frey curve as in \eqref{Frey curve for x^p +y^p = 2^rz^p}. For $\mfq \in P$, if $\mfq\nmid 2p$ then $p \nmid \#\bar{\rho}_{E,p}(I_\mfq)$.
	\end{lem}
	
	\begin{proof}
		By Lemma~\ref{criteria for potentially multiplicative reduction}, it is enough to show that
		%		 for all $\mfq$ with $\mfq \nmid 2$ and $\mfq \nmid p$
		either $v_\mfq(j_E) \geq 0$ or $p | v_\mfq(j_E)$. If $\mfq \nmid \Delta_E$, then $E$ has good reduction at $\mfq$, hence $v_\mfq(j_E)\geq 0$. If $\mfq | \Delta_E$ then $\mfq |abc$. Since $(a,b,c)$ is a primitive solution to the equation~\eqref{x^p +y^p = 2^rz^p} and $\mfq \nmid 2$, $\mfq$ divides exactly one of $a$, $b$ and $c$. Therefore,
		$v_\mfq (c_4)=0, v_\mfq(j_E)=-2p(v_\mfq(abc))$ and hence we are done.
	\end{proof}
	
	\subsubsection{Reduction type at primes $\mfP \in S_K$}
	The following lemma is quite useful.
	\begin{lem}
		\label{lemma for W_K}
		Let $(a,b,c) \in \mcO_K^3$ be a non-trivial primitive solution to the equation~\eqref{x^p +y^p = 2^rz^p} of exponent $p> [K: \Q]r$. 
		For $\mfP \in S_K$, if $\mfP |abc$, then $\mfP | c$. In particular, $\mfP \nmid ab.$
	\end{lem}
\begin{proof}
    If $\mfP \nmid c$, then $\mfP |ab$. So, either $\mfP |a$ or $\mfP |b$. Since $\mfP |2$ and $a^p +b^p = 2^rc^p$ with $r \in \N$, $\mfP$ divides both $a$ and $b$, which implies $\mfP^p | 2^rc^p$. This cannot happen by the unique factorization of ideals in $\mcO_K$ and $p> [K: \Q]r$.
%    By the unique factorization of ideals in $\mcO_K$,
%    this cannot happen, since $p> [K: \Q]r$.   
%     and  we get $\mfP | c$ and which is not possible since 
%     So, $\mfP \nmid ab$. Hence $\mfP |c$ because  $\mfP |abc$.
\end{proof}
		
%	\begin{remark}
%		\label{remark for prime dividing atmost one of a,b,c}
%		Let $(a,b,c) \in W_K$ be a non-trivial primitive solution of \eqref{x^p +y^p = 2^rz^p} of exponent $p$. Assume $p> [K: \Q]r$. Let $\mfP | c$. If $\mfP |a$ or $b$, then 
%		$\mfP$ divides all $a,b,c$, which is not possible since $(a,b,c)$ is primitive. If $\mfP |a$ or $b$, then $\mfP$ divides both $a$ and $b$, since $\mfP |2$. Again as, $a^p +b^p = 2^rc^p$, we get $\mfP^p | 2^rc^p$. But since $p> [K: \Q]r$, we have $\mfP | c$ and which is not possible since $(a,b,c)$ is primitive.
%	\end{remark}

We now recall~\cite[Lemma 3.6]{FS15}, which is useful for determining the type of reduction of the Frey curve $E$ at primes $\mfP \in U_K$.
	\begin{lem}
		\label{3 divides discriminant}
		Let $E/K$ be an elliptic curve. Let $p\geq 3$ be a prime and $\mfP \in S_K$. Suppose $E$ has a potential good reduction at $\mfP$. Then $3 \nmid v_\mfP(\Delta_E)$ if and only if $3 | \#\bar{\rho}_{E,p}(I_\mfP)$.
	\end{lem}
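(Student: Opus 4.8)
The plan is to identify $\#\bar{\rho}_{E,p}(I_\mfP)$ with the semistability defect of $E$ at $\mfP$ and then extract its $3$-part from $v_\mfP(\Delta_E)\bmod 3$. Since $\mfP\in S_K$ we have $\mfP\mid 2$, and as $p\geq 3$ the residue characteristic at $\mfP$ is prime to $p$. I would pass to the completion $K_\mfP$ and its maximal unramified extension $K_\mfP^{\mathrm{nr}}$, whose residue field is algebraically closed, so that every finite extension of $K_\mfP^{\mathrm{nr}}$ is totally ramified. Because $E$ has potential good reduction at $\mfP$ and $p\geq 3$ is invertible in the residue field, the refined criterion of N\'eron--Ogg--Shafarevich shows that $E$ acquires good reduction exactly over the fixed field $L$ of $\ker\bigl(\bar{\rho}_{E,p}|_{I_\mfP}\bigr)$; hence $\bar{\rho}_{E,p}|_{I_\mfP}$ factors through $\Gal(L/K_\mfP^{\mathrm{nr}})$ and is faithful on it, so $e\coloneqq\#\bar{\rho}_{E,p}(I_\mfP)=[L:K_\mfP^{\mathrm{nr}}]$ is the semistability defect of $E$ at $\mfP$, in particular independent of $p$.

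The second step relates $e$ to the discriminant. Since two integral models of $E/K_\mfP$ have discriminants differing by a $12$-th power of a unit, $v_\mfP(\Delta_E)\equiv v_\mfP(\Delta_E^{\min})\pmod{12}$, so $3\mid v_\mfP(\Delta_E)\iff 3\mid v_\mfP(\Delta_E^{\min})$, and I may work with a minimal model. As $L/K_\mfP^{\mathrm{nr}}$ is totally ramified of degree $e$, good reduction over $L$ forces $12\mid e\cdot v_\mfP(\Delta_E^{\min})$; hence $3\nmid v_\mfP(\Delta_E^{\min})$ already implies $3\mid e$. For the converse I would invoke the structure theory of potentially good additive reduction --- Tate's algorithm (\cite[Chapter IV, Theorem~10.4]{S94}) together with Kraus's determination of the defect of semistability, carried out in every residue characteristic: wild inertia at $\mfP$ is a pro-$2$ group, so the $3$-part of $\Gal(L/K_\mfP^{\mathrm{nr}})$ equals the $3$-part of its tame quotient, and that analysis shows this tame $3$-part has order $3/\gcd\bigl(3,v_\mfP(\Delta_E^{\min})\bigr)$. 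Thus the $3$-part of $e$ is $3$ precisely when $3\nmid v_\mfP(\Delta_E^{\min})$, and $1$ otherwise.

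Putting the two steps together gives $3\mid\#\bar{\rho}_{E,p}(I_\mfP)\iff 3\nmid v_\mfP(\Delta_E^{\min})\iff 3\nmid v_\mfP(\Delta_E)$, which is the assertion; this recovers \cite[Lemma~3.6]{FS15}. The one point requiring care is the residue characteristic $2$: there the full defect $e$ can exceed $12$ and $\Gal(L/K_\mfP^{\mathrm{nr}})$ can even be non-abelian (e.g.\ $Q_8$ or $\SL_2(\F_3)$), so the clean formula $e=12/\gcd(12,v_\mfP(\Delta_E^{\min}))$ of the case of residue characteristic $\geq 5$ is unavailable. This is the main obstacle, but it is not a real one: all of that extra complexity is wild, hence $2$-power, whereas the $3$-part of $e$ --- the only thing the statement concerns --- is tame and is governed by $v_\mfP(\Delta_E^{\min})\bmod 3$ exactly as in the tamely ramified situation, so the wild $2$-ramification can be ignored throughout.
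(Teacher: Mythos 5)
Your argument is correct and is essentially the standard proof of this statement, which the paper itself does not prove but simply recalls as \cite[Lemma 3.6]{FS15}: one identifies $\#\bar{\rho}_{E,p}(I_\mfP)$ with the semistability defect $e=[L:K_\mfP^{\mathrm{nr}}]$ via Serre--Tate (valid since $p\geq 3$ is prime to the residue characteristic $2$), gets $12\mid e\cdot v_\mfP(\Delta_E^{\min})$ from good reduction over $L$, and settles the remaining direction by Kraus's computation of the defect in residue characteristic $2$, the wild part being a $2$-group. The only caveat is that the converse implication ($3\mid e\Rightarrow 3\nmid v_\mfP(\Delta_E^{\min})$) is delegated entirely to Kraus rather than argued, but that is exactly what Freitas--Siksek do as well, so your write-up matches the source in both substance and level of detail.
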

	We will now discuss the type of reduction of the Frey curve $E:=E_{a,b,c}$ at  $\mfP \in S_K$ for $(a,b,c) \in W_K$ and at $\mfP \in U_K$ for $(a,b,c) \in \mcO_K^3$. More precisely:
	\begin{lem}
		\label{reduction on T and S}
		Let $\mfP \in S_K$. Suppose $(a,b,c) \in \mcO_K^3$ is a non-trivial primitive solution to the equation~\eqref{x^p +y^p = 2^rz^p} of exponent $p >  \max \left\{ |(4-r)v_\mfP(2)|, [K:\Q]r \right\}$ and let $E := E_{a,b,c}$ be the associated Frey curve as in \eqref{Frey curve for x^p +y^p = 2^rz^p}.
		\begin{enumerate}
			\item If $(a,b,c)\in W_K$, then $v_\mfP(j_E) <0$ and $p \nmid v_\mfP(j_E)$, equivalently $p | \#\bar{\rho}_{E,p}(I_\mfP)$.
			\item Let $r=2,3$. If $\mfP \in U_K$, then either $p | \#\bar{\rho}_{E,p}(I_\mfP)$ or $3 | \#\bar{\rho}_{E,p}(I_\mfP)$.
			%			If $r=1$, then either $v_\mfP(j_E) <0$ and $p | \#\bar{\rho}_{E,p}(I_\mfP)$ or $v_\mfP(j_E) \geq 0$ and $3 \nmid \#\bar{\rho}_{E,p}(I_\mfP)$.
		\end{enumerate} 
	\end{lem} 
	
	\begin{proof}
    The $j$-invariant of $E$ is given by $j_E=2^{8-2r} \frac{(a^{2p}+2^rb^pc^p)^3}{(abc)^{2p}}$. By Lemma~\ref{lemma for W_K}, we get $v_\mfP(j_E)= 2\left( (4-r)v_\mfP(2)-pv_\mfP(c) \right)$.
		Since $p > |(4-r)v_\mfP(2)|$, $v_\mfP(j_E) <0$ and $p \nmid v_\mfP(j_E)$. By Lemma~\ref{criteria for potentially multiplicative reduction}, we get $p | \#\bar{\rho}_{E,p}(I_\mfP)$. This completes the proof of $(1)$.
		%		Finally the proof of the first part of Lemma~\ref{reduction on T and S} follows from the Lemma~\ref{criteria for potentially multiplicative reduction}. This completes the proof of (1).
		
		Let $\mfP \in U_K$. Suppose $(a,b,c)$ is a non-trivial primitive solution to the equation~\eqref{x^p +y^p = 2^rz^p} of exponent $p$ with $r=2,3$. Recall that $\Delta_E=2^{4+2r}(abc)^{2p}$. If $\mfP | a b c$, then by Lemma~\ref{lemma for W_K}, $\mfP |c$ and $ \mfP \nmid ab$. So by first part, $p | \#\bar{\rho}_{E,p}(I_\mfP)$. If $\mfP \nmid abc$, then $v_\mfP(j_E) = 4v_\mfP(2)$ (resp., $2v_\mfP(2)$), $v_\mfP(\Delta_E)= 8v_\mfP(2),$
		(resp., $10v_\mfP(2)$) for $r=2$ (resp., $r=3$). Since $\mfP \in U_K$,  $v_\mfP(j_E) \geq 0$ and $3 \nmid v_\mfP(\Delta_E)$ for $r=2,3$. Now, the proof of $(2)$ follows from Lemma~\ref{3 divides discriminant}.
	\end{proof}
	%   Note that, in part (2) of the lemma above, if $r>4$, then $v_\mfP(j_E)<0$ and if $r=1,4$ then $3 |v_\mfP(\Delta_E)$. Hence, we avoid considering $r\in \N/ \{2,3\}$.

	\subsection{Proofs of Theorem~\ref{main result for x^p+y^p=2^rz^p} and Theorem~\ref{main result1 for x^p+y^p=2^rz^p}}
	
	\subsubsection{Proof of Theorem~\ref{main result for x^p+y^p=2^rz^p}}
	The proof of this theorem depends on the following auxiliary result.	
	\begin{thm}
		%		(Auxiliary Theorem)
		\label{auxilary result x^p+y^p=2^rz^p}
		Let $K$ be a totally real field. Then there is a constant $B_{K,r}>0$ (depends on $K,r$) such that the following hold.
		Let $(a,b,c)\in W_K$ be a non-trivial primitive solution to the equation~\eqref{x^p +y^p = 2^rz^p} of exponent $p> B_{K,r}$ and let $E := E_{a,b,c}$ be the associated Frey curve as in \eqref{Frey curve for x^p +y^p = 2^rz^p}. Then there exists an elliptic curve $E^\prime/K$ such that:
		%        and a constant $B_K$ such that
		\begin{enumerate}
			\item $E^\prime/K$ has good reduction away from $S_K$ and has full $2$-torsion, i.e., $|E^\prime(K)[2]|=4$;
			\item $\bar{\rho}_{E,p} \sim\bar{\rho}_{E^\prime,p}$ and  $v_\mfP(j_{E^\prime})<0$ for all $\mfP \in S_K$.
		\end{enumerate}
		%	Let $j'$ be the $j$-invariant of $E^\prime$. Then 
		%	\begin{enumerate}
			%	
			%		\item for $\mfP \in U$, $v_\mfP(j')<0$ or $3 \nmid v_\mfP(j')$;
			%		\item for $\mfP \notin S$, $v_\mfP(j')\geq0$.
			%	\end{enumerate}
	\end{thm}
	
	%	{\color{red}
		%		In order to prove the auxilary result, we need the following lemma of Darmon and Merel, where they show that the Frey curve $E/ \Q$ in \eqref{Frey curve for x^p +y^p = 2^rz^p} has full rational $2$ torsion (cf. \cite[Lemma 1.3(1)]{DM97}). More precisely,
		%     \begin{lem}
			%     	\label{full 2 torsion point for x^p+y^p=2^rz^p}
			%     	Let $E/K$ be the Frey curve given by \eqref{Frey curve for x^p +y^p = 2^rz^p}. Then $E/ K$ has all its points of order $2$.   	
			%     \end{lem}
		%     \begin{proof}
			% %    	 {\color{red}	For $r=1$, $E/ K$ has all its points of order $2$ by \cite[Lemma 1.3]{DM97}. For, $r\geq 2$, $E/ K$ has all its points of order $2$ (cf. \cite[page 505]{C07})}.
			% 
			%      \end{proof}

		\begin{proof}[Proof of Theorem~\ref{auxilary result x^p+y^p=2^rz^p}] 
			By Lemma~\ref{modularity of Frey curve x^p+y^p=2^rz^p over K}, $E$ is modular for all primes $p>A_{K,r}$. By Lemma~\ref{reduction away from S}, the Frey curve $E$ is semi-stable away from $S_K$. If necessary, we can take the Galois closure of $K$ to ensure that   $\bar{\rho}_{E,p}$ is irreducible for all primes $p \gg 0$ (cf. Theorem~\ref{irreducibility of mod $P$ representation}).
			
			By Theorem~\ref{level lowering of mod $p$ repr}, there exist a Hilbert modular newform $f$ of parallel weight $2$, level $\mfn_p$ and some prime $\lambda$ of $\Q_f$ such that $\lambda | p$ and $\bar{\rho}_{E,p} \sim \bar{\rho}_{f,\lambda}$ for $p \gg 0$. 
			By allowing $p$ to be sufficiently large, we can assume $\Q_f=\Q$ (cf. ~\cite[\S 4]{FS15} for more details).
			
			%	 {\color{red}
				%	 If Conjecture~\ref{ES conj} holds}, then by Theorem~\ref{partial result to ES conj}, there exists an elliptic curve $E_f/K$ of conductor $\mfn_p$ having the same $L$-function as of $f$. If $T_K\neq \varphi$, then for any 
			Let $\mfP \in S_K$. Then $E$ has potential multiplicative reduction at $\mfP$ and $p | \#\bar{\rho}_{E,p}(I_\mfP)$
			for $p >\max \left\{ |(4-r)v_\mfP(2)|, [K:\Q]r \right\}$
			(cf. Lemma~\ref{reduction on T and S}). The existence of an elliptic curve $E_f$ of conductor $\mfn_p$
			then follows from Theorem~\ref{FS partial result of E-S conj} for all $p \gg 0$ (also excluding the primes $p \mid \left( \text{Norm}(K/\Q)(\mfP) \pm 1 \right)$). Therefore, $\bar{\rho}_{E,p} \sim \bar{\rho}_{E_f,p}$  for all primes $p>B_{K,r}$, where $B_{K,r}$ is the maximum of all the above implicit lower bounds.
			
			\begin{enumerate}
				\item Since the conductor of $E_f$ is $\mfn_p$ given in \eqref{conductor for Frey curve x^p +y^p = 2^rz^p}, $E_f$ has good reduction away from $S_K$. After enlarging $B_{K,r}$ by a sufficient amount and by possibly replacing $E_f$ with an isogenous curve, say $E^\prime$, we will find that $E^\prime/ K$ has full $2$-torsion. 
				This follows from~\cite[Proposition 15.4.2]{C07} and the fact that $E/K$ has all its points of order $2$ (cf.~\cite[\S 4]{FS15} for more details). The elliptic curve $E^\prime$ has good reduction away from $S_K$.
				
				\item Let $\mfP \in S_K$. Since $E_f$ is isogenous to $E^\prime$, $\bar{\rho}_{E,p} \sim \bar{\rho}_{E^\prime,p}$ for primes $p > B_{K,r}$. Now, by Lemma~\ref{reduction on T and S}, we get $p |\# \bar{\rho}_{E,p}(I_\mfP)= \# \bar{\rho}_{E^\prime,p}(I_\mfP)$. Finally, by Lemma~\ref{criteria for potentially multiplicative reduction}, we have $v_\mfP(j_{E^\prime})<0$.
			\end{enumerate}
		\end{proof}
		We now prove Theorem~\ref{main result for x^p+y^p=2^rz^p}.
		\begin{proof}[Proof of Theorem~\ref{main result for x^p+y^p=2^rz^p}]
			Let $B_{K,r}$ be as in Theorem~\ref{auxilary result x^p+y^p=2^rz^p}. Let $(a,b,c)\in W_K$ be a non-trivial primitive solution to the equation~\eqref{x^p +y^p = 2^rz^p} of exponent $p>B_{K,r}$. By Theorem~\ref{auxilary result x^p+y^p=2^rz^p}, there exists an elliptic curve $E^\prime/K$ having full $2$-torsion.
			%$\bar{\rho}_{E,p} \sim\bar{\rho}_{E^\prime,p}$ and $E^\prime$ has good reduction away from $S\cup G_{a,b,c}$. Since $E^\prime/K$ has full $2$-torsion, 
			Consequently, $E^\prime$ has a model of the type $E^\prime: Y^2 = (X-e_1)(X-e_2)(X-e_3)$, where $e_1, e_2, e_3$ are all distinct and their cross ratio $\lambda= \frac{e_3-e_1}{e_2-e_1} \in \mathbb{P}^1(K)-\{0,1,\infty\}$. In fact,  $E^\prime$  is isomorphic (over $\bar{K}$) to an elliptic curve $E_\lambda$ in the Legendre form:
			$$E_\lambda : y^2 = x(x - 1)(x-\lambda) \text{ for } \lambda \in \mathbb{P}^1(K)-\{0,1,\infty\}$$ 
			with
			\begin{equation}
				\label{j'-invariant of Legendre form}
				j_{E^\prime} = j(E_\lambda)= 2^8\frac{(\lambda^2-\lambda+1)^3}{\lambda^2(1-\lambda)^2}.
			\end{equation}
			Then the action of the symmetric group $S_3$ on $\{e_1,e_2,e_3\}$ can be extented to an action on $\mathbb{P}^1(K)-\{0,1,\infty\}$ via the cross ratio $\lambda= \frac{e_3-e_1}{e_2-e_1}$. Under the action of $S_3$, the orbit
			of $\lambda \in \mathbb{P}^1(K)-\{0,1,\infty\}$  is the set $\left \{\lambda, \frac{1}{\lambda}, 1-\lambda, \frac{1}{1-\lambda}, \frac{\lambda}{\lambda-1}, \frac{\lambda-1}{\lambda}\right \}$,
			which are called as the $\lambda$-invariants (cf. \cite[\S5]{FS15}).
			
			Let $\lambda\in K$ be any $\lambda$-invariant. Since $E^\prime$ has good reduction away from $S_K$, $j_{E^\prime} \in \mcO_{S_K}$. By \eqref{j'-invariant of Legendre form}, $\lambda\in K$ satisfies a monic polynomial of degree $6$ over $\mcO_{S_K}$, hence $\lambda \in \mcO_{S_K}$. Similarly for $\frac{1}{\lambda}$ as well. Therefore, $\lambda \in \mcO^*_{S_K}$ and $\mu:=1-\lambda \in \mcO^*_{S_K}$. Hence, $(\lambda, \mu)$ satisfy the $S_K$-unit equation~\eqref{S_K-unit solution}. 
			Rewriting \eqref{j'-invariant of Legendre form} in terms of $\lambda, \ \mu$, we get
			\begin{equation}
				\label{j' in terms of lambda and mu}
				j_{E^\prime}= 2^8\frac{(1-\lambda \mu)^3}{(\lambda \mu)^2}.
			\end{equation}
			
			By \eqref{assumption for main result x^p+y^p=2^rz^p}, there exists $\mfP \in S_K$ that satisfies $t:=\max \left\{|v_\mfP(\lambda)|,|v_\mfP(\mu)| \right\}\leq 4v_\mfP(2)$. 
			%		Let $t= \max \left( |v_\mfP(\lambda)|,|v_\mfP(\mu)| \right )$.
			If $t=0$, then $v_\mfP(\lambda)= v_\mfP(\mu)=0$. Thus, $v_\mfP(j_{E^\prime})\geq 8v_\mfP(2)>0$, which contradicts Theorem~\ref{auxilary result x^p+y^p=2^rz^p}. 
			% 		It is known that $v_\mfP(\lambda+\mu) \geq \min\{v_\mfP(\lambda), v_\mfP(\mu)\}$ with equality holds if $v_\mfP(\lambda) \neq v_\mfP(\mu)$. 
			If $t>0$, then  $v_\mfP(\lambda)=v_\mfP(\mu)=-t$, or $v_\mfP(\lambda)=0$ and $v_\mfP(\mu)=t$, or $v_\mfP(\lambda)=t$ and $ v_\mfP(\mu)=0$. This implies $v_\mfP(\lambda \mu)=-2t$ or $t$. In all the cases, we have $v_\mfP(j_{E^\prime})\geq 8v_\mfP(2)-2t$, so $v_\mfP(j_{E^\prime})\geq 0 $, which contradicts Theorem~\ref{auxilary result x^p+y^p=2^rz^p}.
		\end{proof}
		
		\subsubsection{Proof of Theorem~\ref{main result1 for x^p+y^p=2^rz^p}.}
		The proof of this theorem depends on the following auxiliary result.	
		%Now, we will state the auxiliary result for Theorem~\ref{main result1 for x^p+y^p=2^rz^p}.
		%Now, we will state the auxilary result for any non-trivial solution $(a,b,c)\in W_K'$ of \eqref{x^p +y^p = 2z^p} with $K$ satisfying $(ES)'$, which is an auxilary result for Theorem~\ref{main result1 for x^p+y^p=2z^p} .
		\begin{thm}
			\label{auxilary result1 x^p+y^p=2^rz^p}
			Let $K$ be a totally real field satisfying $(ES)$. Then there exists a constant $B_K$ (depends on $K$) such that the following hold. Let $(a,b,c)\in \mcO_K^3$ be a non-trivial primitive solution to the equation~\eqref{x^p +y^p = 2^rz^p} of exponent $p > B_K$ with $r=2,3$ and let $E := E_{a,b,c}$ be the associated Frey curve as in \eqref{Frey curve for x^p +y^p = 2^rz^p}. Then there exists an elliptic curve $E^\prime/K$ such that:
			\begin{enumerate}
				\item $E^\prime/K$ has good reduction away from $S_K$ and has full $2$-torsion, i.e., $|E^\prime(K)[2]|=4$ and $\bar{\rho}_{E,p} \sim\bar{\rho}_{E^\prime,p}$;
				%		 i.e, $|E^\prime(K)[2]|=4$,
				\item for $\mfP \in U_K$,  either $v_\mfP(j_{E^\prime})<0$ or $3\nmid v_\mfP(j_{E^\prime})$.
				%			 If $r=1$ then either $v_\mfP(j_{E^\prime})<0$ or $3| v_\mfP(j_{E^\prime})$.
			\end{enumerate}
		\end{thm}
		\begin{proof}
			Most of the proof is similar to that of Theorem~\ref{auxilary result x^p+y^p=2^rz^p} except 
			for the last part. Suppose $\mfP \in U_K$ and $r=2,3$. 
			\begin{itemize}
				\item If $p | \# \bar{\rho}_{E,p}(I_\mfP)= \# \bar{\rho}_{E^\prime,p}(I_\mfP)$, then $v_\mfP(j_{E^\prime})<0$,
				by Lemma~\ref{criteria for potentially multiplicative reduction}.
				\item If $p \nmid \# \bar{\rho}_{E,p}(I_\mfP)$, then $3 | \# \bar{\rho}_{E,p}(I_\mfP)=\# \bar{\rho}_{E^\prime,p}(I_\mfP)$, by Lemma~\ref{reduction on T and S}. If $v_\mfP(j_{E^\prime}) < 0$, then we are done. If $v_\mfP(j_{E^\prime}) \geq 0$, then by Lemma~\ref{3 divides discriminant}, we get $3\nmid v_\mfP(\Delta_{E^\prime})$. 
%				Since $v_\mfP(j_{E^\prime}) \geq 0$, by Lemma~\ref{3 divides discriminant}, $3\nmid v_\mfP(\Delta_{E^\prime})$.
                Since $v_\mfP(j_{E^\prime}) = 3v_\mfP(c_4) -v_\mfP(\Delta_{E^\prime})$ and  $3\nmid v_\mfP(\Delta_{E^\prime})$, we have $3\nmid v_\mfP(j_{E^\prime})$.
			\end{itemize}
		\end{proof}
		\begin{proof}[Proof of Theorem~\ref{main result1 for x^p+y^p=2^rz^p}]
			By~\eqref{assumption for main result1 x^p+y^p=2^rz^p}, there exists $\mfP \in U_K$ such that $\max \left\{|v_\mfP(\lambda)|,|v_\mfP(\mu)| \right\}\leq 4v_\mfP(2)$ and $v_\mfP(\lambda\mu)\equiv v_\mfP(2) \pmod 3$. 
			Now, arguing as in the proof of Theorem~\ref{main result for x^p+y^p=2^rz^p} and by Theorem~\ref{auxilary result1 x^p+y^p=2^rz^p}, 
			we get $v_\mfP(j_{E^\prime}) \geq 0$, because $\max \left\{|v_\mfP(\lambda)|,|v_\mfP(\mu)| \right\}\leq 4v_\mfP(2)$. Since $j_{E^\prime}= 2^8\frac{(1-\lambda \mu)^3}{(\lambda \mu)^2}$, we have $v_\mfP(j_{E^\prime}) \equiv 8v_\mfP(2)-2v_\mfP(\lambda\mu) \pmod 3$. 
			%		Since $3 \nmid v_\mfP(2)$, we get $3 \nmid v_\mfP(j_{E^\prime})$, when  $v_\mfP(\lambda\mu)\equiv 0 \text{ or } 2v_\mfP(2) \pmod 3$. 
			Since $v_\mfP(\lambda\mu)\equiv v_\mfP(2) \pmod 3$, we have $v_\mfP(j_{E^\prime})\equiv 6v_\mfP(2) \pmod 3$ and hence $3 | v_\mfP(j_{E^\prime})$.
			Therefore, $v_\mfP(j_{E^\prime}) \geq0$ and $3 | v_\mfP(j_{E^\prime})$, which contradicts Theorem~\ref{auxilary result1 x^p+y^p=2^rz^p}.
		\end{proof}
		%\begin{proof}[Proof of Theorem~\ref{main result2 for x^p+y^p=2^rz^p}]
		%	
		%	Similar to the proof of Theorem~\ref{main result for x^p+y^p=2^rz^p}, we find that  $v_\mfP(j_{E^\prime}) \geq 0$, when $t:=\max \left(|v_\mfP(\lambda)|,|v_\mfP(\mu)| \right )\leq 4v_\mfP(2)$. Since $j_{E^\prime} := 2^8\frac{(1-\lambda \mu)^3}{(\lambda \mu)^2}$, we get $v_\mfP(j_{E^\prime})\equiv 8v_\mfP(2) \pmod 3$ by \eqref{assumption for main result2 x^p+y^p=2^rz^p}.
		%	$v_\mfP(j_{E^\prime})=8v_\mfP(2)-2v_\mfP(\lambda\mu) \pmod 3$. 
		%		Since $3 \nmid v_\mfP(2)$, we get $3 \nmid v_\mfP(j_{E^\prime})$, when  $v_\mfP(\lambda\mu)\equiv 0 \text{ or } 2v_\mfP(2) \pmod 3$. 
		%	By \eqref{assumption for main result1 x^p+y^p=2^rz^p}, $v_\mfP(j_{E^\prime})=6v_\mfP(2) \pmod 3$ and hence $3 | v_\mfP(j_{E^\prime})$.
		%	Therefore, we get an ideal $\mfP \in U_K$ with $v_\mfP(j_{E^\prime}) \geq0$ and $3 \nmid v_\mfP(j_{E^\prime})$, which contradicts to Theorem~\ref{auxilary result1 x^p+y^p=2^rz^p}.
		%\end{proof}

		\section{Solutions of the Diophantine equation $x^p +y^p = z^2$ over $K$}
		\label{section for AFLT (p,p,2)}
%		Throughout this section, we assume $h_K^+=1$, where $h_K^+$ 
%		denote the narrow class number of $K$. 
		In this section, we study the solutions of the following equation over $K$.
		\begin{equation}
			\label{p,p,2}
			x^p +y^p = z^2
		\end{equation}  
		with prime exponent $p>2$.
		%	 and let $W_K''$ be the set of $(a, b, c)\in  \mcO_K^3$ satisfying  the equation\ref{p,p,2} with $\mfP \nmid \left(a^p+4b^p\right)$ for every $\mfP \in U_K$. Clearly,  $W_K' \subseteq W_K''$
		
		\begin{dfn}[Trivial solution]
			We say a solution $(a,b,c) \in \mcO_K^3$ to the equation~\eqref{p,p,2} of exponent $p$, is trivial if $abc=0$, otherwise non-trivial. Further, we call it as primitive if $a,b,c$ are pairwise co-prime.
		\end{dfn} 
	Recall that $S_K=\{ \mfP \in P: \mfP|2 \}$. Define $T_K:=\{ \mfP \in S_K :\f(\mfP, 2)=1 \}$.
	\begin{dfn}
		\label{definition for W_K'}
			Let $W^\prime_K$ be the set of $(a, b, c)\in \mcO_K^3$ satisfying the equation~\eqref{p,p,2} of exponent $p$ with $\mfP |ab$ for every $\mfP \in S_K$. 
	\end{dfn}
		\subsection{Main result} 
		For any positive integer $m$, we define the $m$-Selmer group of $K$ and $S_K$ by $$K(S_K,m):=\{x\in K^*/(K^*)^m : v_\mfp(x) \equiv 0 \pmod m\  \forall \mfp \notin S_K \}.$$ 
		By ~\cite[\S 5.2.2 and 7.4]{C00},  $K(S_K,m)$ is a finite abelian group, for every $K$ and $m$. 
		Let $L=K(\sqrt{a})$ for $a \in K(S_K,2)$. Let $S_L$ be the set of all prime ideals of $L$ lying over primes of $S_K$.
		%Let $W_K$ (resp. $W_K'$) be the set of $(a, b, c)\in  \mcO_K^3$ satisfying ~\ref{p,p,2} with $\mfP |b$ (resp. $\mfP |ab$) for every $\mfP \in T_K$ (resp. $\mfP \in S_K$). 
		%Consider the $S_K$-unit equation as
		%	\begin{equation}
			%	\label{S_K unit solution for (p,p,2)}
			%	\lambda+ \mu=1, \ \lambda, \mu \in \mcO_{S_K}^\ast,
			%\end{equation}
			Consider $S_K$-unit (resp., $S_L$-unit) equation
			\begin{equation}
				\label{S_L unit solution for (p,p,2)}
				\lambda+ \mu=1, \ \lambda, \mu \in \mcO_{S_K}^\ast \ (\mrm{resp}.,
				\lambda, \mu \in \mcO_{S_L}^\ast).
			\end{equation}
			%\begin{dfn}
			%%	Let $W_K$ be the set of $(a, b, c)\in  \mcO_K^3$ satisfying~\eqref{p,p,2} with $\mfP |b$ for every $\mfP \in T_K$. 
			%	Let $W_K'$ be the set of $(a, b, c)\in  \mcO_K^3$ satisfying~\eqref{p,p,2} with $\mfP |ab$ for every $\mfP \in S_K$. 
			%%	Note that for $S_K=T_K$, we have $W_K'\supseteq W_K$.
			%\end{dfn}
			We now state the main result of this section.
			\begin{thm}
				\label{main result1 for (p,p,2)}
				Let $K$ be a totally real field. For each $a\in K(S_K,2)$, let $L=K(\sqrt{a})$. Suppose, for every solution $(\lambda, \mu)$ to the $S_K$-unit solution~\eqref{S_L unit solution for (p,p,2)}, there exists some $\mfP \in S_K$ that satisfies
				\begin{equation}
					\label{assumption for main result1 for T_K for (p,p,2)}
				\max \left\{|v_\mfP(\lambda)|,|v_\mfP(\mu)| \right\}\leq 4v_\mfP(2), 
				\end{equation}
				and for every solution $(\lambda, \mu)$ to the $S_L$-unit equation~\eqref{S_L unit solution for (p,p,2)}, there exists some $\mfP' \in S_L$ that satisfies
				\begin{equation}
					\label{assumption for main result1 for T_L for (p,p,2)}
					\max \left\{|v_{\mfP'}(\lambda)|,|v_{\mfP'}(\mu)| \right\} \leq 4v_{\mfP'}(2).
				\end{equation}
				Then the equation~\eqref{p,p,2} of exponent $p$ has no asymptotic solution in $W_K^\prime$, i.e., there exists a constant $B_K$ (depends on $K$) such that for primes $p>B_K$, the equation~\eqref{p,p,2} of exponent $p$ has no non-trivial primitive solutions in $W_K^\prime$.
			\end{thm}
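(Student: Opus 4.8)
The plan is to mirror the proof of Theorem~\ref{main result for x^p+y^p=2^rz^p}, but now the Frey curve must be built from a solution of $x^p+y^p=z^2$ rather than $x^p+y^p=2^rz^p$, and the field extension $L=K(\sqrt{a})$ enters because the Frey curve is only defined over $K$ after choosing a square root of a suitable $S_K$-Selmer element. First I would attach to a non-trivial solution $(a,b,c)\in W_K'$ the standard $(p,p,2)$ Frey curve (of the form $Y^2=X^3+2cX^2+a^pX$, or one of its twists), record its $c_4$, $\Delta_E$, $j_E$, and observe that away from $S_K$ the curve is semistable with $p\mid v_\mfq(\Delta_E)$, exactly as in Lemma~\ref{reduction away from S}; this uses $h_K^+=1$ to write the solution integrally and to control the supports. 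Then, as in Theorem~\ref{modularity of Frey curve x^p+y^p=2^rz^p over K}, modularity holds for $p>A_K$ by Theorem~\ref{modularity result of elliptic curve over totally real} (only finitely many $j$-invariants to exclude, and the exponent $p$ is pinned down on each), and irreducibility of $\bar\rho_{E,p}$ holds for $p>C_K$ after passing to the Galois closure by Theorem~\ref{irreducibility of mod $P$ representation}.

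Next I would run level lowering via Theorem~\ref{level lowering of mod $p$ repr} to obtain a Hilbert newform $f$ of parallel weight $2$ and level $\mfn_p$ supported in $S_K\cup\{\mfm\}$; enlarging $B_K$ forces $\Q_f=\Q$, and then Theorem~\ref{partial result to ES conj} or Theorem~\ref{FS partial result of E-S conj} produces a genuine elliptic curve $E_f/K$ with $\bar\rho_{E,p}\sim\bar\rho_{E_f,p}$ — here we invoke potentially multiplicative reduction at some $\mfP\in S_K$ together with $p\mid\#\bar\rho_{E,p}(I_\mfP)$ (the analogue of Lemma~\ref{reduction on T and S}, which is where Lemma~\ref{reduction on T_K and U_K on W_k' for (p,p,2)} referenced in the introduction is used) and discard the finitely many $p$ dividing $\mathrm{Norm}(\mfP)\pm1$. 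Passing to an isogenous curve $E'$ gives full $2$-torsion, hence a Legendre model with $\lambda$-invariant an $S_K$-unit satisfying $\lambda+(1-\lambda)=1$, and $v_\mfP(j_{E'})<0$ for every $\mfP\in S_K$ by construction, just as in Theorem~\ref{auxilary result x^p+y^p=2^rz^p}. At this point hypothesis~\eqref{assumption for main result1 for T_K for (p,p,2)} contradicts $v_\mfP(j_{E'})<0$ by the same valuation bookkeeping as in the proof of Theorem~\ref{main result for x^p+y^p=2^rz^p} (if $\max(|v_\mfP(\lambda)|,|v_\mfP(\mu)|)\le 4v_\mfP(2)$ then $v_\mfP(j_{E'})\ge 8v_\mfP(2)-2t\ge 0$).

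The one genuinely new ingredient is the appearance of $L=K(\sqrt{a})$: the $(p,p,2)$ equation is naturally handled by a quadratic twist, and the Frey curve attached to $(a,b,c)$ is a twist by an element of $K(S_K,2)$, so after level lowering over $K$ one may still need to base change to $L$ to realize the correct elliptic curve and to run the image-of-inertia comparison over $L$; this is exactly why the theorem carries \emph{two} hypotheses, \eqref{assumption for main result1 for T_K for (p,p,2)} over $K$ and \eqref{assumption for main result1 for T_L for (p,p,2)} over $L$. I would treat the two cases — whether the relevant Frey/twisted curve descends to $K$ or only lives over $L$ — separately, applying the $K$-hypothesis in the first and the $S_L$-unit hypothesis in the second, using that $S_L$ is precisely the set of primes of $L$ above $S_K$ so that ``good reduction away from $S_K$'' over $K$ becomes ``good reduction away from $S_L$'' over $L$. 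The main obstacle I anticipate is bookkeeping the twist: making sure the curve $E'$ supplied by Eichler--Shimura/level-lowering is, up to the ambiguity in $K(S_K,2)$, of Legendre type over the right field, so that the $\lambda$-invariant genuinely is an $S_K$- (or $S_L$-) unit and the valuation estimate applies; once that is pinned down the contradiction with the stated hypotheses is immediate, and the constant $B_K$ is the maximum of $A_K$, $C_K$, the level-lowering bound, the bound forcing $\Q_f=\Q$, and the finitely many excluded primes from the norm condition.
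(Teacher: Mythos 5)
Your overall architecture (modularity, irreducibility, level lowering, Eichler--Shimura via the image-of-inertia criterion at a prime of $S_K$, then a contradiction from the $S$-unit hypothesis via the Legendre $\lambda$-invariant) is the same as the paper's, which in turn delegates most of these steps to the proof of \cite[Theorem 4.1]{IKO20}. The one place where your write-up goes wrong is the sentence ``Passing to an isogenous curve $E'$ gives full $2$-torsion, hence a Legendre model with $\lambda$-invariant an $S_K$-unit.'' That is the correct mechanism for the equation $x^p+y^p=2^rz^p$, whose Frey curve $Y^2=X(X-a^p)(X+b^p)$ already has full $2$-torsion over $K$; it fails for the $(p,p,2)$ Frey curve $Y^2=X^3+4cX^2+4a^pX$ of \eqref{Frey curve for (p,p,2)}, which has only the single rational point of order two $(0,0)$ (the other two points of order two are defined over $K(\sqrt{b^p})=K(\sqrt{b})$). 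Accordingly, the level-lowered curve $E'$ can only be guaranteed to have \emph{a} non-trivial $2$-torsion point over $K$ --- this is exactly what Theorem~\ref{auxilary result to main result1 (p,p,2)}(1) asserts --- and over $K$ one does not in general get a Legendre model or an $S_K$-unit $\lambda$. As written, your second paragraph would conclude using only hypothesis \eqref{assumption for main result1 for T_K for (p,p,2)}, leaving the $S_L$-hypothesis unused, which is a sign the step is broken.

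This is precisely where $L=K(\sqrt{a})$ with $a\in K(S_K,2)$ enters, and your explanation of it (``the Frey curve is a twist by an element of $K(S_K,2)$, so one may need to base change to realize the correct elliptic curve'') misattributes the mechanism. The correct point is: writing $E'\colon Y^2=X(X^2+\alpha X+\beta)$ with good reduction outside $S_K$, the discriminant $\alpha^2-4\beta$ of the quadratic factor has even valuation at every prime outside $S_K$, hence represents a class $a\in K(S_K,2)$; the full $2$-torsion of $E'$ is defined over $L=K(\sqrt{a})$, and it is over $L$ that one obtains a Legendre $\lambda$-invariant lying in $\mcO_{S_L}^{*}$, with $v_{\mfP'}(j_{E'})<0$ for $\mfP'\in S_L$ inherited from $v_{\mfP}(j_{E'})<0$ for $\mfP\in S_K$ (Theorem~\ref{auxilary result to main result1 (p,p,2)}(2), via Lemma~\ref{reduction on T_K and U_K on W_k' for (p,p,2)}). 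The classes $a$ that are squares in $K^{*}$ are where \eqref{assumption for main result1 for T_K for (p,p,2)} is used, and the remaining classes are where \eqref{assumption for main result1 for T_L for (p,p,2)} is used; this is why the theorem quantifies over all $a\in K(S_K,2)$. Your final paragraph does gesture at this two-case split, so the repair is local: delete the claim of full $2$-torsion over $K$, and derive the quadratic extension from the $2$-division field of $E'$ rather than from a twist of the Frey curve.
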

			The theorem above can be thought of as a generalization of 
			~\cite[Theorem 1.1]{IKO20}, where they used the assumptions $T_K \neq \varphi$ and $h_K^+=1$ to prove it. However, 
			Theorem~\ref{main result1 for (p,p,2)} is independent of these assumptions.

			\section{Steps to prove Theorem~\ref{main result1 for (p,p,2)}}	
			\label{section for the proof of main result1 for (p,p,2)}
			For any non-trivial primitive solution $(a, b, c) \in \mcO_K^3$ to the equation~\eqref{p,p,2} of exponent $p$, consider the Frey  curve
			\begin{equation}
				\label{Frey curve for (p,p,2)}
				E=E_{a,b,c} : Y^2 = X^3 + 4cX^2 + 4a^pX,
			\end{equation}
			where $c_4=2^6(4c^2-3a^p)=2^6(a^p+4b^p),\ \Delta_E=2^{12}(a^2b)^p$ and $ j_E=2^6\frac{(a^p+4b^p)^3}{(a^2b)^p}$.
			
			\subsection{Modularity of the Frey curve}
			We now study the modularity of the Frey curve $E$ given by \eqref{Frey curve for (p,p,2)} over $K$.
			\begin{thm}
				\label{modularity of Frey curve (p,p,2)}
				Let $(a,b,c) \in W_K'$ be a non-trivial primitive solution to the equation~\eqref{p,p,2} of exponent $p$ and let $E := E_{a,b,c}$ be the associated Frey curve as in \eqref{Frey curve for (p,p,2)}. Then there exists a constant $A_K$ (depends on $K$) such that for primes $p >A_K$, $E/K$ is modular.
			\end{thm}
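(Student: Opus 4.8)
The plan is to transcribe the proof of Theorem~\ref{modularity of Frey curve x^p+y^p=2^rz^p over K} almost verbatim; the only genuine change is the rational parametrisation of the $j$-invariant, the rest being formal. For a non-trivial solution we have $abc\neq0$, so $a\in K^\ast$; putting $t:=(b/a)^p\in K^\ast$, a one-line computation from \eqref{Frey curve for (p,p,2)} gives
$$ j_E \;=\; 2^6\,\frac{(a^p+4b^p)^3}{(a^2b)^p}\;=\;2^6\,\frac{(1+4t)^3}{t}\;=:\;j(t), $$
so that $j_E=j(t)$ is the value at $t$ of a rational function of degree $3$ over $K$ whose only pole is at $t=0$.

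By Theorem~\ref{modularity result of elliptic curve over totally real} there are only finitely many $\bar K$-isomorphism classes of non-modular elliptic curves over $K$; let $j_1,\dots,j_s\in K$ be their $j$-invariants. Since CM elliptic curves over totally real fields are modular and $j(1)=8000$ is the $j$-invariant of an elliptic curve with CM by $\Z[\sqrt{-2}]$, we have $8000\notin\{j_1,\dots,j_s\}$. If $E$ were non-modular, then $j_E=j_i$ for some $i$, so $t$ is a root of the cubic $2^6(1+4T)^3-j_iT\in K[T]$ (whose constant term $2^6$ is nonzero, so $t\neq0$); hence $t$ lies in a finite set $\{t_1,\dots,t_m\}\subset K^\ast$ with $m\le 3s$, depending only on $K$, and $t_k\neq1$ for every $k$.

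It remains to bound the exponent. Fix a non-trivial solution for which $E$ is non-modular, so $(b/a)^p=t_k$ for some $k$. If $b/a$ were a root of unity then, $K$ being totally real, $b/a\in\{1,-1\}$: the value $1$ is impossible because $t_k\neq1$, while $b/a=-1$ gives $a^p+b^p=0$ (as $p$ is odd), forcing $c=0$, contrary to non-triviality. So $b/a$, and hence $t_k=(b/a)^p$, is not a root of unity. But for a fixed $t_k\in K^\ast$ that is not a root of unity, only finitely many primes $p$ can satisfy $t_k\in(K^\ast)^p$: if $t_k\notin\mcO_K^\ast$, then $p$ divides $v_\mfq(t_k)$ for any $\mfq$ with $v_\mfq(t_k)\neq0$; if $t_k\in\mcO_K^\ast$, then the (nonzero) image of $t_k$ in $\mcO_K^\ast/\{\pm1\}\cong\Z^{[K:\Q]-1}$ has all its coordinates divisible by $p$. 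Letting $A_K$ be the largest prime produced in this way as $k$ runs over $1,\dots,m$, we conclude that any non-trivial solution of prime exponent $p>A_K$ has modular Frey curve.

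I do not anticipate a real obstacle: the argument parallels that of Theorem~\ref{modularity of Frey curve x^p+y^p=2^rz^p over K}. The only point that needs slightly different handling is the case $b/a=\pm1$: in the $x^p+y^p=2^rz^p$ setting this immediately yields a trivial solution, whereas here $b=-a$ still forces $c=0$ but $b=a$ instead forces $j_E=8000$, which is dealt with using the standard modularity of CM elliptic curves over totally real fields; the remaining input is the elementary finiteness of the set of primes $p$ with $t_k\in(K^\ast)^p$.
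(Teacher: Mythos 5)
Your proposal is correct and follows the same route as the paper's own (very terse) proof: use Theorem~\ref{modularity result of elliptic curve over totally real} to reduce to finitely many non-modular $j$-invariants, note that $j_E$ is a degree-$3$ rational function of $t=(b/a)^p$ so that non-modularity confines $t$ to a finite set depending only on $K$, and then bound the exponent. Two of your refinements go beyond what the paper writes and are worth flagging. First, the paper dismisses the degenerate parameter value by saying ``$(1,1,\pm\sqrt2)$ is a trivial solution,'' but this does not cover non-trivial solutions with $b=a$ such as $(u,u,c)$ with $2u^p=c^2$ and $u\neq 1$ (e.g.\ $(2,2,2^{(p+1)/2})$ over $\Q$), all of which give $t=1$ and $j_E=8000$; your observation that every curve with $j$-invariant $8000$ has CM by $\Z[\sqrt{-2}]$ and is therefore modular over a totally real field handles all of these at once, and genuinely patches the paper's argument (you should cite the modularity of CM elliptic curves, which the paper does not). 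Second, where the paper asserts that the equations ``determine $p$ uniquely,'' you prove the correct and sufficient statement — that a fixed $t_k\in K^\ast$ which is not a root of unity lies in $(K^\ast)^p$ for only finitely many $p$ — via valuations and the unit lattice $\mcO_K^\ast/\{\pm1\}$. Both refinements are sound, and the case $b=-a$ is correctly reduced to $c=0$ using that $p$ is odd.
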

			\begin{proof}
				The proof of this theorem is similar to the proof of Theorem~\ref{modularity of Frey curve x^p+y^p=2^rz^p over K}, except that, 
				here, $j_E=2^6 \frac{(a^{p}+4b^p)^3}{(a^2b)^{p}}$,  $j(\lambda)=2^6\frac{(4\lambda -1)^3}{\lambda}$ for $\lambda= -\frac{b^p}{a^p}$. So, there exists 
				$\lambda_1, \lambda_2, ..., \lambda_m \in K$ such that $E$ is modular for all $\lambda \notin\{\lambda_1, \lambda_2, ..., \lambda_m\}$. 
				If $\lambda= \lambda_k$ for some $k \in \{1, 2, \ldots, m \}$, then $\left(\frac{b}{a} \right)^p=-\lambda_k$.
				The above equation determines $p$ uniquely and denotes it $p_k$.
				If not, let $p \neq q$ are primes such that $\left(\frac{b}{a} \right)^p=\left(\frac{b}{a} \right)^q$, which means $\left(\frac{b}{a}\right)$ is a root of unity. Since $K$ is totally real, we get $b=\pm a$. Again, since $(a,b,c)$ is primitive, $a=\pm1$ and $b=\pm1$, which is not possible since $(a,b,c) \in W_K'$. Hence the proof follows by taking $A_K=\max \{p_1,...,p_m\}$.
			\end{proof}    
			%	Similar to the proof of Theorem~\ref{modularity of Frey curve x^p+y^p=2^rz^p over K}, the proof of Theorem~\ref{modularity of Frey curve (p,p,2)} follows.	
			%	By Theorem~\ref{modularity result of elliptic curve over totally real}, there exists finitely many $\bar{K}$-isomorphism classes of elliptic curves over $K$ which are non-modular. Let $j_1,...,j_r \in K$ are the $j$-invariant of these classes.
			%	Let $\lambda= -\frac{b^p}{a^p}$, then $j$-invariant of $E$ is;
			%	$$j(\lambda)=2^6\frac{(4\lambda -1)^3}{\lambda}.$$	
			%	Now, for each equation $j(\lambda)=j_i$, has at most three solutions in $K$. Hence, there are values $\lambda_1, \lambda_2, ..., \lambda_m \in K$, with $m\leq 3r$ such that $E$ is modular for all $\lambda \notin\{\lambda_1, \lambda_2, ..., \lambda_m\}$.	
			%	If $\lambda= \lambda_k$ for some $k$, then $-\frac{b^p}{a^p}=\lambda_k$. This gives 
			%	$$\left(\frac{b}{a} \right)^p=-\lambda_k.$$
			%	The above equation give a bound for $p$ unless $-\lambda_k$ is a roots of unity. Since $K$ is totally real, the only roots of unity are $\pm 1$.  Then \eqref{p,p,2} has a solution of the form $(1, 1, \sqrt{2})$, which it is not possible.

			%	\begin{cor}
				%		\label{modularity of Frey curve of signature $(p,p,p)$}
				%		Let $K$ be a totally real field. There is some constant $A_K$, depends on $K$, such that for any non-trivial solution $(a,b,c)$ of the Fermat equation \eqref{(p,p,p)} of prime exponent $p> A_K$, the Frey curve $E_{a,b,c}$ given by \eqref{Frey curve $(p,p,p)$} is modular.
				%	\end{cor}
			\subsection{Reduction type at odd primes}
			%Let $u, v$ and $w$  be elements of $\mcO_K$ such that $u + v = w^2$ and $uvw\neq 0$. Consider the Frey curve
			%\begin{equation}
			%	E : Y^2 = X^3 + 4wX^2 + 4uX
			%\end{equation}
			%
			%Here, $\Delta_E=2^{12}u^2v,\ j_E=2^6\frac{(u+4v)^3}{u^2v}$ and $c_4=2^6(4w^2-3u)=2^6(4v+u)$.
			
			%The following lemma (cf.\cite[Lemma 3.1]{IKO20}) describes the types of reduction of the Frey curve $E$ \eqref{Frey curve for (p,p,2)} at primes $\mfq \nmid 2$.
			%\begin{lem}
			%	Let $\mfq \nmid 2$ be a prime and let
			%	$s = \min \{v_\mfq(u),\ v_\mfq(v),\  v_\mfq(w)\}$. Write $E_\mfq$ for a local minimal model at $\mfq$. Then 
			%	\begin{enumerate}
				%		\item $E_\mfq$ has good reduction at $\mfq$ if and only if $s$ is even and
				%		\begin{equation}
					%			\label{condition for good reduction at odd primes}
					%			v_\mfq(u)= v_\mfq(v)=2v_\mfq(w).
					%		\end{equation}
				%		\item $E_\mfq$ has multiplicative reduction at $\mfq$ if and only if $s$ is even and and \eqref{condition for good reduction at odd primes} fails to hold. In this case, the minimal discriminant $\Delta_\mfq$ at $\mfq$ satisfies
				%		$v_\mfq(\Delta_\mfq)= 2v_\mfq(u)+2v_\mfq(v)-6s$.
				%		\item $E_\mfq$ has additive reduction at $\mfq$ if and only if $s$ is odd.
				%	\end{enumerate}
			%\end{lem}
			The following lemma is an analog of I\c{s}ik, et al. (cf. 
			\cite[Lemma 3.2]{IKO20}) and Lemma~\ref{reduction away from S}, which specifies the types of reduction of the Frey curve $E$ given in \eqref{Frey curve for (p,p,2)} at primes $\mfq$ away from $S_K$.
			\begin{lem}
				\label{reduction away from S_K (p,p,2)}
				Let $(a,b,c) \in \mcO_K^3$ be a non-trivial primitive solution to the equation~\eqref{p,p,2} of exponent $p$ and let $E$ be the Frey curve in \eqref{Frey curve for (p,p,2)}. Then 
				at all primes $\mfq \in P$ away from $S_K$, $E$ is minimal, semi-stable and satisfies $p | v_\mfq(\Delta_E)$. Let $\mfn$ be the conductor of $E$ and $\mfn_p$ be as in \eqref{conductor of elliptic curve}. Then
				\begin{equation}
					\label{conductor of E (p,p,2)}
					\mfn=\prod_{\mfP \in S_K}\mfP^{r_\mfP} \prod_{\mfq|ab,\ \mfq \in P \setminus S_K }\mfq,\ \mfn_p=\prod_{\mfP \in S_K}\mfP^{r_\mfP'},
				\end{equation}
				where $0\leq r_\mfP' \leq r_\mfP \leq 2+6v_\mfP(2)$. 
			\end{lem}

			The following lemma describes the type of reduction of the Frey curve $E=E_{a,b,c}$ at $\mfP \in S_K$ and $(a,b,c)\in W_K'$. 
			More precisely:
			\begin{lem}
				\label{reduction on T_K and U_K on W_k' for (p,p,2)}
				\label{reduction on T_L and U_L on W_k' for (p,p,2)}
				Suppose $(a,b,c)\in W_K'$ is a non-trivial primitive solution to the equation~\eqref{p,p,2} of exponent $p$ and let $E := E_{a,b,c}$ be the associated Frey curve as in \eqref{Frey curve for (p,p,2)}. If $\mfP \in S_K$ and $p >  6v_\mfP(2)$, then $p | \#\bar{\rho}_{E,p}(I_\mfP)$. The same conclusion also holds for $\mfP \in S_L$.
				%			\item If $\mfQ \in U_K$ then either $v_\mfQ(j_E) <0$ and $p | \#\bar{\rho}_{E,p}(I_\mfQ)$ or, $v_\mfQ(j_E) \geq 0$ and $3 \nmid \#\bar{\rho}_{E,p}(I_\mfQ)$
				%		\end{enumerate} 
		\end{lem}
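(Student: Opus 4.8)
The plan is to follow the same template as Lemma~\ref{reduction on T and S}(1): first show that the relevant valuation of $j_E$ is negative, then invoke Lemma~\ref{criteria for potentially multiplicative reduction} to conclude $p \mid \#\bar\rho_{E,p}(I_\mfP)$. Fix $\mfP \in S_K$ and a non-trivial solution $(a,b,c) \in W_K'$. By definition of $W_K'$ we have $\mfP \mid ab$, and since $\gcd(a,b,c)=1$ (using $h_K^+=1$), $\mfP$ cannot divide both $a$ and $b$; from $a^p + b^p = c^2$ one checks $\mfP \nmid c$ as well. So exactly one of $a$, $b$ is divisible by $\mfP$.

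First I would compute $v_\mfP(j_E)$ in each case from the formula $j_E = 2^6\frac{(a^p+4b^p)^3}{(a^2b)^p}$. If $\mfP \mid a$ and $\mfP \nmid b$, then $v_\mfP(a^p + 4b^p) = v_\mfP(4b^p) = 2v_\mfP(2)$, so $v_\mfP(j_E) = 6v_\mfP(2) + 6v_\mfP(2) - p\,v_\mfP(a^2 b) = 12 v_\mfP(2) - p\,(2 v_\mfP(a))$; since $v_\mfP(a) \ge 1$ and $p > 6 v_\mfP(2)$, this is strictly negative, and moreover $p \nmid v_\mfP(j_E)$ because $|v_\mfP(j_E)| = 2p\,v_\mfP(a) - 12 v_\mfP(2)$ is not a multiple of $p$ (the term $12 v_\mfP(2)$ has absolute value $< p$ and is nonzero). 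If instead $\mfP \mid b$ and $\mfP \nmid a$, then $v_\mfP(a^p + 4b^p) = 0$ (as $v_\mfP(4b^p) \ge 2 v_\mfP(2) > 0 = v_\mfP(a^p)$), so $v_\mfP(j_E) = 6 v_\mfP(2) - p\,v_\mfP(b) < 0$, again with $p \nmid v_\mfP(j_E)$ for the same reason. In both cases $v_\mfP(j_E) < 0$, i.e.\ $E$ has potentially multiplicative reduction at $\mfP$, and $p \nmid v_\mfP(j_E)$, so Lemma~\ref{criteria for potentially multiplicative reduction} (applicable since $p > 6 v_\mfP(2) \ge 6 > 5$ and $\mfP \mid 2 \nmid p$) yields $p \mid \#\bar\rho_{E,p}(I_\mfP)$.

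For the final sentence about $\mfP \in S_L$, I would observe that the whole argument is insensitive to the base field: over $L$ the Frey curve $E$ still has the same $j$-invariant and the same Weierstrass model, the hypothesis $\mfP \mid ab$ persists (a prime of $L$ above a prime of $S_K$ still divides $2$, hence divides $ab$ by the definition of $W_K'$), and $\gcd(a,b,c)$ remains a unit since $a,b,c$ lie in $\mcO_K \subseteq \mcO_L$. Replacing $v_\mfP$ by $v_{\mfP'}$ for $\mfP' \in S_L$ and noting $v_{\mfP'}(2) = e(\mfP'/\mfP)\,v_\mfP(2)$ so that $p > 6 v_{\mfP'}(2)$ is the right hypothesis, the identical computation gives $v_{\mfP'}(j_E) < 0$, $p \nmid v_{\mfP'}(j_E)$, and then $p \mid \#\bar\rho_{E,p}(I_{\mfP'})$.

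The only genuine subtlety — and the point I would be most careful about — is the claim $p \nmid v_\mfP(j_E)$, i.e.\ that the ``$12 v_\mfP(2)$'' (or ``$6 v_\mfP(2)$'') correction term cannot conspire with $p\,v_\mfP(a)$ to produce a multiple of $p$. This is exactly where the hypothesis $p > 6 v_\mfP(2)$ is used: it forces $0 < 12 v_\mfP(2) < 2p$ in the first case and $0 < 6 v_\mfP(2) < p$ in the second, so $v_\mfP(j_E)$ is congruent mod $p$ to $-12 v_\mfP(2)$ (resp.\ $6 v_\mfP(2)$), which is nonzero mod $p$. Everything else is a routine valuation computation parallel to Lemma~\ref{reduction on T and S}.
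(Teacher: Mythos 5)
Your argument is correct and is essentially the paper's own proof: the same valuation computation of $v_\mfP(j_E)=6v_\mfP(2)+3v_\mfP(a^p+4b^p)-pv_\mfP(a^2b)$ in the two cases $\mfP\mid a$ and $\mfP\mid b$, the same use of $p>6v_\mfP(2)$ to get $v_\mfP(j_E)<0$ and $p\nmid v_\mfP(j_E)$, and the same appeal to Lemma~\ref{criteria for potentially multiplicative reduction}; the paper is merely terser about the $\mfP\mid b$ and $S_L$ cases. One minor slip: in the case $\mfP\mid a$ the hypothesis only gives $0<12v_\mfP(2)<2p$ (not $<p$, as your first parenthetical asserts), so to exclude $p\mid 12v_\mfP(2)$ you should add that $12v_\mfP(2)$ is even while $p$ is odd.
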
 
		\begin{proof}
			Suppose $\mfP \in S_K$ and $(a,b,c)\in W_K'$. By the definition of $W_K'$, either $\mfP | a$ or $\mfP |b$. Recall that $ j_E=2^6\frac{(a^p+4b^p)^3}{(a^2b)^p}$. If $\mfP |a$, then $\mfP \nmid b$ because $(a,b,c)$ is primitive. So $v_\mfP(j_E)= 6v_\mfP(2)+3v_\mfP(a^p+4b^p)-2pv_\mfP(a)$. Since $p >  6v_\mfP(2) $, $v_\mfP(j_E)=12v_\mfP(2)-2pv_\mfP(a)<0$ and $p \nmid v_\mfP(j_E)$. The same argument holds even if $\mfP |b$. The proof now follows from Lemma~\ref{criteria for potentially multiplicative reduction}.
		\end{proof}

		\subsection{Proof of Theorem~\ref{main result1 for (p,p,2)}}
        The proof of this theorem depends on the following auxiliary result..
		\begin{thm}
			%	(Auxiliary result)
			\label{auxilary result to main result1 (p,p,2)}
			Let $K$ be a totally real field. Then there is a constant $B_K$ (depends on $K$) such that the following hold. Let $(a,b,c) \in W_K'$ be a non-trivial primitive solution to the equation~\eqref{p,p,2} of exponent $p > B_K$ and let $E := E_{a,b,c}$ be the associated Frey curve as in \eqref{Frey curve for (p,p,2)}. Then there exists an elliptic curve $E^\prime$ over K such that:
			\begin{enumerate}
				\item $E^\prime/K$ has good reduction away from $S_K$ and has a non-trivial $2$-torsion point;
				\item $\bar{\rho}_{E,p} \sim\bar{\rho}_{E^\prime,p}$ and  $v_\mfP(j_{E^\prime})<0$ for $\mfP \in S_K$.
				%		\item for $\mfP \in U$, $v_\mfP(j')<0$ or $3 \nmid v_\mfP(j')$;
				%		\item for $\mfP \notin S$, $v_\mfP(j')\geq0$.
			\end{enumerate}
		\end{thm}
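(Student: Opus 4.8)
The plan is to imitate the proof of Theorem~\ref{auxilary result x^p+y^p=2^rz^p} almost line for line. The one structural difference is that the Frey curve in~\eqref{Frey curve for (p,p,2)} factors as $Y^2 = X\left(X^2+4cX+4a^p\right)$, so it carries only the obvious rational $2$-torsion point $(0,0)$; the other two $2$-division points generate a quadratic extension $K(\sqrt{b^p})$ of $K$ (note $c^2-a^p=b^p$), which is precisely why the conclusion asks for a non-trivial $2$-torsion point rather than the full $2$-torsion that appeared in the $x^p+y^p=2^rz^p$ case.

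First I would collect the local and modularity inputs. By Lemma~\ref{reduction away from S_K (p,p,2)}, $E$ is minimal and semi-stable at every prime $\mfq \notin S_K$, satisfies $p \mid v_\mfq(\Delta_E)$ there, and has conductor $\mfn$ whose prime-to-$p$ part $\mfn_p$ is supported on $S_K$; since $p > 2$, each $\mfq \mid p$ lies outside $S_K$, so $E$ is semi-stable at all $\mfq \mid p$. By Theorem~\ref{modularity of Frey curve (p,p,2)}, $E/K$ is modular for $p > A_K$. Replacing $K$ by its Galois closure $\widetilde{K}$ (a totally real Galois field, over which $E$ remains semi-stable above $p$) and applying Theorem~\ref{irreducibility of mod $P$ representation}, the restriction $\bar{\rho}_{E,p}|_{G_{\widetilde{K}}}$, and therefore $\bar{\rho}_{E,p}$ itself, is irreducible for $p > C_{\widetilde{K}}$. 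For $p$ large the remaining hypotheses $e(\mfq/p) < p-1$ for $\mfq \mid p$ and $\Q(\zeta_p)^+ \not\subseteq K$ hold automatically, so Theorem~\ref{level lowering of mod $p$ repr} produces a Hilbert modular newform $f$ over $K$ of parallel weight $2$ and level $\mfn_p$, with $\bar{\rho}_{E,p} \sim \bar{\rho}_{f,\varpi}$ for some $\varpi \mid p$; enlarging the constant we may assume $\Q_f = \Q$, exactly as in~\cite[\S 4]{FS15}.

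Next I would promote $f$ to an elliptic curve over $K$. The crucial point, and the reason the hypothesis $h_K^+ = 1$ suffices here with no appeal to $(ES)$, is that for $(a,b,c) \in W_K'$ the curve $E$ has potentially multiplicative reduction at every $\mfP \in S_K$: Lemma~\ref{reduction on T_K and U_K on W_k' for (p,p,2)} gives $v_\mfP(j_E) < 0$ and $p \mid \#\bar{\rho}_{E,p}(I_\mfP)$, and since $p>2$ we have $\mfP \nmid p$. Discarding the finitely many $p$ dividing $\mathrm{Norm}(K/\Q)(\mfP) \pm 1$ for some $\mfP \in S_K$ (absorbed into $B_K$), Theorem~\ref{FS partial result of E-S conj} yields an elliptic curve $E_f/K$ of conductor $\mfn_p$ with $\bar{\rho}_{E,p} \sim \bar{\rho}_{E_f,p}$; since $\mfn_p$ is supported on $S_K$, $E_f$ has good reduction away from $S_K$. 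Because $E$ has the rational $2$-torsion point $(0,0)$ and $\bar{\rho}_{E,p} \sim \bar{\rho}_{E_f,p}$, passing to a suitable isogenous curve $E'$ via~\cite[Proposition 15.4.2]{C07} (as in~\cite[\S 4]{FS15}) gives $E'/K$ with a rational point of order $2$, which is part (1). For part (2), $\bar{\rho}_{E,p} \sim \bar{\rho}_{E',p}$ holds by construction, and for $\mfP \in S_K$ we have $p \mid \#\bar{\rho}_{E',p}(I_\mfP)$ by Lemma~\ref{reduction on T_K and U_K on W_k' for (p,p,2)}, hence $v_\mfP(j_{E'}) < 0$ by Lemma~\ref{criteria for potentially multiplicative reduction}.

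The hard part is not any single step but making every ``$p$ sufficiently large'' clause uniform in one effective constant $B_K$ depending only on $K$: one must simultaneously beat $A_K$ (modularity), $C_{\widetilde{K}}$ (irreducibility after passing to the Galois closure), the thresholds forcing $e(\mfq/p) < p-1$ and $\Q(\zeta_p)^+ \not\subseteq K$, the bound making $\Q_f = \Q$ after level lowering, and the finitely many primes dividing $\mathrm{Norm}(K/\Q)(\mfP) \pm 1$ for $\mfP \in S_K$. A secondary subtlety to spell out is that the Galois closure enters only as an auxiliary device for verifying irreducibility over $K$ (the narrow class number of $\widetilde{K}$ is irrelevant), and that the two consequences of $h_K^+ = 1$ actually used, namely $\gcd(a,b,c) = 1$ and the triviality of the ideal class group so that no auxiliary ideal $\mfm$ intervenes, are invoked over $K$ before that step.
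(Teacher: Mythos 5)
Your proposal is correct and takes essentially the same approach as the paper: the paper's own proof simply invokes the argument of \cite[Theorem 4.1]{IKO20} for part (1) and the isomorphism $\bar{\rho}_{E,p} \sim \bar{\rho}_{E',p}$, which is exactly the modularity--irreducibility--level-lowering--Eichler--Shimura chain you spell out (including the use of potentially multiplicative reduction at $\mfP \in S_K$ to apply Theorem~\ref{FS partial result of E-S conj} without assuming $(ES)$). The final assertion $v_\mfP(j_{E'})<0$ is proved in the paper exactly as you do, via Lemma~\ref{reduction on T_K and U_K on W_k' for (p,p,2)} and Lemma~\ref{criteria for potentially multiplicative reduction}.
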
 
		 		\begin{proof}
%			Similar to the proof of Theorem~\ref{auxilary result x^p+y^p=2^rz^p} along with Theorem~\ref{modularity of Frey curve (p,p,2)}, Lemma~\ref{reduction away from S_K (p,p,2)} and Lemma~\ref{reduction on T_K and U_K on W_k' for (p,p,2)} would imply Theorem~\ref{auxilary result to main result1 (p,p,2)}, except the last part.
           Arguing as in the proof of Theorem~\ref{auxilary result x^p+y^p=2^rz^p}, there exists a constant $B_K$ (depending on $K$) and an elliptic curve $E_f/K$ of conductor $\mfn_p$ such that 
           $\bar{\rho}_{E,p} \sim \bar{\rho}_{E_f,p}$ for all primes $p>B_{K}$.
           
           	\begin{enumerate}
           	\item Since the conductor of $E_f$ is $\mfn_p$ given in \eqref{conductor of E (p,p,2)}, $E_f$ has good reduction away from $S_K$. After enlarging $B_K$ by an efficient amount and by possibly replacing $E_f$ with an isogenous curve, say $E^\prime$, we will find that $E^\prime/ K$ has a non-trivial $2$-torsion point (cf.~\cite[page 1247]{M22} for more details). Also, $E^\prime$ has good reduction away from $S_K$.
%           	This follows from~\cite[Proposition 15.4.2]{C07} and the fact that $E/K$ has a non-trivial $2$-torsion point. (cf.~\cite[Page 1247]{M22} for more details). Also, $E^\prime$ has good reduction away from $S_K$.
           	
           	\item Since $E_f$ is isogenous to $E^\prime$ and $\bar{\rho}_{E,p} \sim \bar{\rho}_{E_f,p}$, we have $\bar{\rho}_{E,p} \sim\bar{\rho}_{E^\prime,p}$. So by Lemma~\ref{reduction on T_K and U_K on W_k' for (p,p,2)}, we have $p | \# \bar{\rho}_{E,p}(I_\mfP)=\# \bar{\rho}_{E^\prime,p}(I_\mfP)$ for any $\mfP \in S_K$. 
           	This gives $v_\mfP(j_{E^\prime})<0$, by Lemma~\ref{criteria for potentially multiplicative reduction}.
           \end{enumerate}
           
			%	 by Lemma~\ref{reduction away from S_K (p,p,2)}, Lemma~\ref{modularity of Frey curve (p,p,2)},Theorem~\ref{irreducibility of mod $P$ representation}, Lemma~\ref{reduction on T_K and U_K on W_k' for (p,p,2)}, Theorem~\ref{FS partial result of E-S conj} and $S_K \neq \varphi$. 
%			For the last part, suppose $\mfP \in S_K$. By Lemma~\ref{reduction on T_K and U_K on W_k' for (p,p,2)}, $p | \# \bar{\rho}_{E,p}(I_\mfP)$ and hence $p | \# \bar{\rho}_{E^\prime,p}(I_\mfP)$. Thus, by Lemma~\ref{criteria for potentially multiplicative reduction}, $v_\mfP(j_{E^\prime})<0$.	
			%By applying Lemma~\ref{reduction on T_K and U_K on W_k' for (p,p,2)} in Theorem~\ref{auxilary result to main result ISIK}, the proof of Theorem~\ref{auxilary result to main result1 (p,p,2)} follows.
		\end{proof}  
	 
		We will now give a proof of  Theorem~\ref{main result1 for (p,p,2)} and it is similar to that of~\cite[Theorem 1.1]{IKO20}.

		\begin{proof}[Proof of Theorem~\ref{main result1 for (p,p,2)}]
			Let $B_K$ be as in Theorem~\ref{auxilary result to main result1 (p,p,2)} and let $(a,b,c)\in W_K'$ be a non-trivial primitive solution to the equation~\eqref{p,p,2} of exponent $p>B_K$.
			%	By Theorem ~\ref{auxilary result to main result1 (p,p,2)}, there exists an elliptic curve $E^\prime/K$ having a non-trivial $2$-torsion,  $\bar{\rho}_{E,p} \sim\bar{\rho}_{E^\prime,p}$ and $E^\prime$ has good reduction away from $S_K$ with $v_\mfP(j')<0$ for $\mfP \in S_K$.
			%	Similar to the proof of \cite[Theorem 1.1]{IKO20}, the proof of Theorem~\ref{main result1 for (p,p,2)} follows from Theorem ~\ref{auxilary result to main result1 (p,p,2)}, Lemma~\ref{reduction on T_K and U_K on W_k' for (p,p,2)} and Lemma~\ref{reduction on T_L and U_L on W_k' for (p,p,2)}.
%			Similar to the proof of~\cite[Theorem 1.1, \S 5]{IKO20}, the proof of Theorem~\ref{main result1 for (p,p,2)} follows from Theorem ~\ref{auxilary result to main result1 (p,p,2)}, \eqref{assumption for main result1 for T_K for (p,p,2)} and \eqref{assumption for main result1 for T_L for (p,p,2)}.
         By Theorem~\ref{auxilary result to main result1 (p,p,2)}, there exists an elliptic curve $E^\prime/K$ having a non-trivial $2$-torsion point. Hence $E^\prime/K$ has a model of the form 
         \begin{equation}
         	E^\prime: y^2=x^3+cx^2+dx
         \end{equation}
         for some $c,d \in K$. Let $L$ be the splitting field of $x^3+cx^2+dx$ over $K$. Then either $E^\prime$ has full $2$-torsion over $K$, or $E^\prime$ has full $2$-torsion over $L$.
     
     Suppose $E^\prime$ has full $2$-torsion over $K$. Now, arguing as in the proof of Theorem~\ref{main result for x^p+y^p=2^rz^p} and by \eqref{assumption for main result1 for T_K for (p,p,2)}, we get $v_\mfP(j_{E^\prime}) \geq 0$  for some $\mfP \in S_K$, which is a contradiction to Theorem~\ref{auxilary result to main result1 (p,p,2)}.
     
     Suppose $E^\prime$ has full $2$-torsion over $L$. By \cite[Proposition 3.1]{K19}, $L=K(\sqrt{a})$ for some $a \in K(S_K,2)$. We now show that $E^\prime/L$ continues to have the same properties as in Theorem~\ref{auxilary result to main result1 (p,p,2)} over $L$. Namely;
     \begin{itemize}
      \item Since $E^\prime$ has good reduction away from $S_K$, $E^\prime/L$ has good reduction away from $S_L$ and has full $2$-torsion over $L$.  
      \item By Theorem~
      \ref{auxilary result to main result1 (p,p,2)}, $\bar{\rho}_{E,p} \sim\bar{\rho}_{E^\prime,p}$.
      By Lemma~\ref{reduction on T_K and U_K on W_k' for (p,p,2)}, we have $p | \# \bar{\rho}_{E,p}(I_\mfP)=\# \bar{\rho}_{E^\prime,p}(I_\mfP)$ for any $\mfP \in S_L$. So by Lemma~\ref{criteria for potentially multiplicative reduction}, $v_\mfP(j_{E^\prime})<0$ for $\mfP \in S_L$.
     \end{itemize}
      Arguing as in the proof of Theorem~\ref{main result for x^p+y^p=2^rz^p} and by \eqref{assumption for main result1 for T_L for (p,p,2)}, we get $v_\mfP(j_{E^\prime}) \geq 0$ for some $\mfP \in S_L$. This is a contradiction
      to Theorem~\ref{auxilary result to main result1 (p,p,2)} over $L$.
     	\end{proof}

        \section{Local criteria for the solutions of the Diophantine equation $x^p+y^p=2^rz^p$ over $K$}
        \label{section for local criteria}
        
        In this section, we give several purely local criteria for $K$ where the equation~\eqref{x^p +y^p = 2^rz^p} of exponent $p$ has no asymptotic solution in $\mcO_K^3$, in $W_K$. First, we give local criteria for $K$ of odd degree such that the equation~\eqref{x^p +y^p = 2^rz^p} of exponent $p$ with $r=2,3$ has no asymptotic solution in $\mcO_K^3$. More specifically:
		
		\begin{prop}
			\label{local criteria1 for no solution in K}
			Let $n= [K: \Q]$ and $l>5$ be a prime such that $(n, l-1) = 1$. Suppose that $l$ totally ramifies and $2$ is inert in $K$. Then the equation~\eqref{x^p +y^p = 2^rz^p} of exponent $p$ with $r=2,3$ has no asymptotic solution in $\mcO_K^3$.
			
%			Then there exists a constant $B_K$ (depends on $K$) such that for primes $p>B_K$, the equation~\eqref{x^p +y^p = 2^rz^p} of exponent $p$ with  $r=2,3$ has no non-trivial primitive solutions in $\mcO_K^3$.
		\end{prop}
		For example, one can take $K=\Q(\alpha)$, where $\alpha$ satisfies the minimal polynomial $x^3 - x^2 + 1$. In $K$,  $2$ is inert, $23$ is totally ramified in $K$ and $(3, 23-1)=1$.
		
		% First, we need to recall some results from \cite{FKS21}. 
		% In \cite[Lemma 3.1]{FKS21}, Freitas etal. have deduces the $S_K$ unit equation \eqref{S_K-unit solution} into an equation in $\mcO_K$. More precisely,
		%  \begin{lem}
			% 		\label{FKS lem 3.1}
			% 		Let $\mfq$ be a prime ideal over $2$ and $S_K=\{\mfq\}.$ Let $(\lambda, \mu)$ be a solution of the $S_K$-unit equation \eqref{S_K-unit solution}. Then there exists a solution $(\lambda', \mu')$ of \eqref{S_K-unit solution} satisfying $\lambda' \in \mcO_K \cap \mcO_S^\ast, \ \mu' \in \mcO_K^\ast$ with $m_{\lambda, \mu}=m_{\lambda', \mu'}$.
			% 	\end{lem}
		% The following Theorem is helpful in the proof of Theorem~\ref{example for no solution in K}.
		% \begin{thm}[Theorem 3.1 in \cite{FKS21}]
			% 	\label{thm for unit equation has no solution}
			% Let $[K:\Q]=n$ and $p>5$ be a prime. Suppose $(n, \frac{p-1}{2}) = 1$ and $p$ totally ramifies in $K$. Then the unit equation 
			% $\lambda + \mu =1$ with $\lambda, \mu \in \mcO_K^\ast$ has no solution.	
			% \end{thm}

		%	The following lemma is useful in the proof of Theorem~\ref{local criteria1 for no solution in K}.
		%	\begin{lem}
			%			\label{bound for S-unit equation}
			%			Let $K,r,2,l$ be as in Theorem~\ref{local criteria1 for no solution in K} and let $\mfq$ be the unique prime ideal over $2$. Every solution $(\lambda, \mu)$ to the $S_K$-unit solution \eqref{S_K-unit solution} satisfies $m_{\lambda, \mu}:=\max \left(|v_\mfq(\lambda)|,|v_\mfq(\mu)| \right )<2 v_\mfq (2)$.
			%		\end{lem}
		\begin{proof}[Proof of Proposition~\ref{local criteria1 for no solution in K}]
			Let $(\lambda, \mu)$ be a solution of the $S_K$-unit equation~\eqref{S_K-unit solution}. Let $\mfP\in U_K$ be the unique prime ideal over $2$ and let $m_{\lambda, \mu}:=\max \left\{|v_\mfP(\lambda)|,|v_\mfP(\mu)| \right\}$. Now, we show that $m_{\lambda, \mu}<2 v_\mfP (2)=2$.
			If not, let $m_{\lambda, \mu} \geq 2 v_\mfP (2)=2$. By~\cite[Lemma 3.1]{FKS21}, there exists a solution $(\lambda', \mu')$ of \eqref{S_K-unit solution} satisfying $\lambda' \in \mcO_K, \ \mu' \in \mcO_K^\ast$ with $m_{\lambda, \mu}=m_{\lambda', \mu'}$, i.e., 
			$v_\mfP (\mu')=0$ and $v_\mfP (\lambda')=m_{\lambda', \mu'}$. Therefore, $\lambda' \equiv 0 \pmod {\mfP^2}$, which gives $\lambda' \equiv 0 \pmod 4$ and hence $\mu' \equiv 1 \pmod 4$. Thus $\text{Norm}_{K/ \Q}(\mu') \equiv 1 \pmod 4$. Since $\mu'$ is a unit, $\mathrm{Norm}_{K/ \Q}(\mu')=1$.
			Now, arguing as in the proof of~\cite[Lemma 4.1]{FKS21}, we get
			$\mathrm{Norm}_{K/ \Q}(\mu')=-1$, which is a contradiction.
			%			Let $(\lambda, \mu)$ be a solution of the $S_K$-unit equation \eqref{S_K-unit solution} and let $\mfq$ be the unique prime ideal over $2$. By Lemma~\ref{bound for S-unit equation}, $m_{\lambda, \mu}< 2v_\mfq (2)=2$. 
			So, $m_{\lambda, \mu} < 2 v_\mfP (2)=2$.
			Hence, $m_{\lambda, \mu}=0$ or $1$. If $m_{\lambda, \mu}=0$, then $(\lambda, \mu)$ is a solution of the unit equation $\lambda+ \mu  =1$, which contradicts~\cite[Theorem 4]{FKS21}. Thus, $m_{\lambda, \mu}=1=v_\mfP (2)$. 
			%			. A simple calculation imply that $ \left( v_\mfq(\lambda), v_\mfq (\mu) \right) = (-1, -1)$ or $(1,0)$ or $(0,1)$. This gives $v_\mfq(\lambda \mu) \equiv 1 \pmod 3$. 
			%	Since, $2v_\mfq (2) \equiv 1 \pmod 3$, the proof of Theorem~\ref{example for no solution in K} follows from Theorem~\ref{main result1 for x^p+y^p=2^rz^p}.
			Now, the proof of Proposition~\ref{local criteria1 for no solution in K} follows from Corollary~\ref{cor to main result1 for x^p+y^p=2^rz^p}.
		\end{proof}

		\begin{prop}
			\label{local criteria2 for no solution in K}
			Let $n=[K: \Q]$ be odd and $3 \nmid n$. Suppose $2$ is inert and $3$ totally splits in $K$. Then the conclusion of Proposition~\ref{local criteria1 for no solution in K} holds.
            
		\end{prop}
		
		\begin{proof}
%			As in the proof of Theorem~\ref{local criteria1 for no solution in K}, the claim  follows from \cite[Theorem 1]{T20},~\cite[Lemma 5.1]{FKS21} by noting that  $m_{\lambda, \mu}= v_\mfq (2)=1$.
           Let $\mfP\in U_K$ be the unique prime ideal over $2$. 
           Suppose, there exists a solution $(\lambda, \mu)$ to $S_K$-unit equation~\eqref{S_K-unit solution} such that $m_{\lambda, \mu}\geq 2 v_\mfP (2)$. Then arguing as in the proof of Proposition~\ref{local criteria1 for no solution in K}, we get a solution $(\lambda', \mu')$ to $S_K$-unit equation~\eqref{S_K-unit solution} with $\lambda' \in \mcO_K, \ \mu' \in \mcO_K^\ast$ such that $\lambda' \equiv 0 \pmod {4}, \mu' \equiv 1 \pmod 4$. This imply $\mathrm{Norm}_{K/ \Q}(\mu')=1$. 
           By~\cite[Lemma 5.1]{FKS21}, we have $\mu' \equiv -1 \pmod3$. So $\mathrm{Norm}_{K/ \Q}(\mu') \equiv (-1)^n=-1 \pmod3$ and hence $\mathrm{Norm}_{K/ \Q}(\mu')=-1$, which is a contradiction. Thus, every solution $(\lambda, \mu)$ to the $S_K$-unit equation~\eqref{S_K-unit solution} satisfies $m_{\lambda, \mu}<2 v_\mfP (2)=2$. Hence, $m_{\lambda, \mu}=0$ or $1$. If $m_{\lambda, \mu}=0$, then $(\lambda, \mu)$ is a solution of the unit equation $\lambda+ \mu  =1$, which contradicts
           ~\cite[Theorem 1]{T20}. Note that $3 \nmid n$ and $3$ totally splits in $K$. 
           %			. A simple calculation imply that $ \left( v_\mfq(\lambda), v_\mfq (\mu) \right) = (-1, -1)$ or $(1,0)$ or $(0,1)$. This gives $v_\mfq(\lambda \mu) \equiv 1 \pmod 3$. 
           %	Since, $2v_\mfq (2) \equiv 1 \pmod 3$, the proof of Theorem~\ref{example for no solution in K} follows from Theorem~\ref{main result1 for x^p+y^p=2^rz^p}.
           So, $m_{\lambda, \mu}=1=v_\mfP(2)$. Now, the proof of Proposition follows from Corollary~\ref{cor to main result1 for x^p+y^p=2^rz^p}.
		\end{proof}
		
		We give local criteria for $K$ with $[K:\Q] \equiv 1 \pmod 2$
		such that the equation~\eqref{x^p +y^p = 2^rz^p} of exponent $p$ has no asymptotic solution in $W_K$. These criteria are an analogue of~\cite[Theorems 1,3]{FKS21} and their proofs are an application of Theorem~\ref{main result for x^p+y^p=2^rz^p}.
%		This criteria is analog of~\cite[Theorems 1,3]{FKS21}. The proof of this criteria is an application of Theorem~\ref{main result for x^p+y^p=2^rz^p}.
		%, where it was shown that every solution $(\lambda, \mu)$ to the $S_K$-unit solution \eqref{S_K-unit solution} satisfies $m_{\lambda, \mu}< 2 v_\mfq (2)$, where $\mfq$ be the unique prime ideal in $K$ over $2$. 
        \begin{cor}
			\label{local criteria1 for no solution in W_K}
			Suppose one of the hypothesis holds:
			\begin{enumerate}
				\item Suppose $n=[K:\Q]$ and $l>5$ is a prime number such that $(n, l-1) = 1$. Assume $2$ is either inert or totally ramified in $K$ and $l$ totally ramifies in $K$.
				\item Suppose $[K:\Q]$ is odd, $2$ is either inert or totally ramified in $K$ and $3$ totally splits in 
				$K$.
			\end{enumerate}
			Then the equation~\eqref{x^p +y^p = 2^rz^p} of exponent $p$ has no asymptotic solution in $W_K$.
		\end{cor}
		\begin{proof}
		Let $\mfP\in S_K$ be the unique prime over $2$.
		    For part(1) (resp., part(2)), by arguing as in the proof of Proposition~\ref{local criteria1 for no solution in K} (resp., Proposition~\ref{local criteria2 for no solution in K}), every solution $(\lambda, \mu)$ to the $S_K$-unit solution \eqref{S_K-unit solution} satisfies $m_{\lambda, \mu}<2 v_\mfP(2)$. 
        Now, the proof of the corollary follows from Theorem~\ref{main result for x^p+y^p=2^rz^p}.
\end{proof}
Note that, in part (2), we allow $2$ to be inert or totally ramified in $K$, whereas in~\cite[Theorem 3]{FKS21}, $2$ is totally ramified. 
Finally, we give local criteria for real quadratic fields $K$ such that the equation~\eqref{x^p +y^p = 2^rz^p} of exponent $p$ has no asymptotic solution in $W_K$ (resp., in $\mcO_K^3$) and these are analogue of~\cite[Theorems 1,2]{FS15}.

\begin{cor}
	\label{local criteria2 quadratic for no solution in W_K}
	Let $K=\Q(\sqrt{d})$. Suppose $d \geq2 $ is a square-free integer satisfy one of the following conditions:
	\begin{enumerate}
		\item $d \equiv 3 \pmod 8$;
		\item $d \equiv 5 \pmod 8$;
		\item $d \equiv 6 \text{ or } 10 \pmod {16}$;
		\item $d \equiv 2 \pmod {16}$ and $d$ has some prime divisor $q \equiv 5 \text{ or } 7 \pmod 8$;
		\item $d \equiv 14 \pmod {16}$ and $d$ has some prime divisor $q \equiv 3 \text{ or } 5 \pmod 8.$
	\end{enumerate} 
	Then the equation~\eqref{x^p +y^p = 2^rz^p} of exponent $p$ has no asymptotic solution in $W_K$.
\end{cor}
\begin{proof}
	Let $\mfP \in S_K$. By~\cite[Table 1 in \S6]{FS15}, the $S_K$-unit equation~\eqref{S_K-unit solution} has only irrelevant solutions $(2,-1), (-1,2)$ and $ (\frac{1}{2}, \frac{1}{2})$. In these cases, $\max \left\{|v_\mfP(\lambda)|,|v_\mfP(\mu)| \right\}= v_\mfP(2)$. Since 
	$\mfP$ satisfies~\eqref{assumption for main result x^p+y^p=2^rz^p},
	the proof of the corollary follows from Theorem~\ref{main result for x^p+y^p=2^rz^p}.
\end{proof}

The proof of Corollary~\ref{local criteria2 quadratic for no solution in W_K} uses the hypothesis~\eqref{assumption for main result x^p+y^p=2^rz^p} for $\mfP \in S_K$, whereas~\cite[Theorem 1]{FS15} uses the hypothesis for $\mfP \in T_K$. This freedom allows us to include the fields $\Q(\sqrt{d})$ for $d\equiv 5 \pmod 8$ in Corollary~\ref{local criteria2 quadratic for no solution in W_K} when compared with the list of fields in~\cite[Theorem 1]{FS15}. In this case, $S_K \neq \varphi$, but $T_K=\varphi$.
	
		\begin{cor}
		\label{local criteria2 quadratic for no solution in K}
		Let $d\geq 2$ be a square-free integer satisfying one of the congruences in Corollary~\ref{local criteria2 quadratic for no solution in W_K}. Suppose that Conjecture~\ref{ES conj} holds over $K=\Q(\sqrt{d})$. Then the equation~\eqref{x^p +y^p = 2^rz^p} of exponent $p$ with $r=2,3$ has no asymptotic solution in $\mcO_K^3$.
	\end{cor}
	\begin{proof}
        Since $[\Q(\sqrt{d}) : \Q] =2$, $v_\mfP(2) = 1$ or $2$, which implies $S_K =U_K$. Let $\mfP \in S_K$. By \cite[Table 1 in \S6]{FS15}, the $S_K$-unit equation~\eqref{S_K-unit solution} has only irrelevant solutions $(2,-1), (-1,2)$ and $ (\frac{1}{2}, \frac{1}{2})$. In these cases, $\max \left\{|v_\mfP(\lambda)|,|v_\mfP(\mu)| \right\}=v_\mfP(2)$. Now, the proof
		of the corollary follows from Corollary~\ref{cor to main result1 for x^p+y^p=2^rz^p}.
	\end{proof}


\begin{thebibliography}{abc9999}
										
										%		\bibitem[AKMRVW20]{AKMRVW20} 
										%		Alvarado Alejandra, Koutsianas Angelos, Malmskog Beth, Rasmussen Christopher, Vincent Christelle, West Mckenzie.
										%		A robust implementation for solving the S-unit equation and several applications, arXiv:1903.00977.
										%		
										\bibitem[Bla04]{B04}
										Blasius, Don. 
										Elliptic curves, Hilbert modular forms and the Hodge conjecture. Contributions to automorphic forms, geometry and number theory, 83--103, Johns Hopkins Univ. Press, Baltimore, MD, 2004.
										
									
										
										
										\bibitem[Coh00]{C00}
										Cohen, Henri.
										Advanced topics in computational number theory. Graduate Texts in Mathematics, 193. Springer-Verlag, New York, 2000.
										
										 
										
										
 
										
										\bibitem[Coh07]{C07}
										Cohen, Henri.
										Number theory. Vol. II. Analytic and modern tools. Graduate Texts in Mathematics, 240. Springer, New York, 2007.
									    
										
										\bibitem[Dec16]{D16}
										Deconinck, Heline.
										On the generalized Fermat equation over totally real fields. Acta Arith. 173 (2016), no. 3, 225--237.
										
								
										
										\bibitem[DM97]{DM97} 
										Darmon, Henri; Merel, Lo\"{i}c.
										Winding quotients and some variants of Fermat's last theorem. J. Reine Angew. Math. 490 (1997), 81--100.
										
									
										
										%			\bibitem[DNS20]{DNS20}
										%			Derickx, Maarten; Najman, Filip; Siksek, Samir
										%			Elliptic curves over totally real cubic fields are modular.
										%			Algebra \& Number Theory 14 (2020), no. 7, 1791--1800.
										
										\bibitem[FKS21]{FKS21}
										Freitas, Nuno; Kraus, Alain; Siksek, Samir. 
										Local criteria for the unit equation and the asymptotic Fermat's last theorem. Proc. Natl. Acad. Sci. USA 118 (2021), no. 12, Paper No. 2026449118, 5 pp.
										
										
										
										
										\bibitem[FLBS15]{FLBS15}
										Freitas, Nuno; Le Hung, Bao V.; Siksek, Samir.
										Elliptic curves over real quadratic fields are modular. Invent. Math. 201 (2015), no. 1, 159--206.
								        
								        
										
										\bibitem[FS15a]{FS15}
										Freitas, Nuno; Siksek, Samir.
										The asymptotic Fermat's last theorem for five-sixths of real quadratic fields. Compos. Math. 151 (2015), no. 8, 1395--1415.
										
										
										
										\bibitem[FS15b]{FS15 Irred}
										Freitas, Nuno; Siksek, Samir.
										Criteria for irreducibility of mod $p$ representations of Frey curves. J. Th\'{e}or. Nombres Bordeaux 27 (2015), no. 1, 67--76.
									
										
									
										
										
										\bibitem[Hid81]{H81}
										Hida, Haruzo.
										On abelian varieties with complex multiplication as factors of the Jacobians of Shimura curves. Amer. J. Math. 103 (1981), no. 4, 727--776.
										
										
										
										\bibitem[IKO20]{IKO20}
										I\c{s}ik, Erman; Kara, Yasemin; Ozman, Ekin.
										On ternary Diophantine equations of 
										signature $(p,p,2)$ over number fields.
										Turkish J. Math. 44 (2020), no. 4, 1197--1211.
										
										
										
										
										\bibitem[IKO22]{IKO22}
										I\c{s}ik, Erman; Kara, Yasemin; Ozman, Ekin.
										On ternary Diophantine equations of signature $(p,p,3)$ over number fields. To appear in the Canadian Journal of Mathematics (2022).
										
										
										
										\bibitem[JM04]{JM04}
										Jarvis, Frazer; Meekin, Paul.
										The Fermat equation over ${\Bbb Q}(\sqrt{2})$. J. Number Theory 109 (2004), no. 1, 182--196.
										
										

										
										\bibitem[Kou19]{K19}
										Koutsianas, Angelos.
										Computing all elliptic curves over an arbitrary number field with prescribed primes of bad reduction. Exp. Math. 28 (2019), no. 1, 1--15. 
										
										
										
										\bibitem[KO20]{KO20}
										Kara, Yasemin; Ozman, Ekin.
										Asymptotic generalized Fermat's last theorem over number fields. Int. J. Number Theory 16 (2020), no. 5, 907--924.
										
										
										
										
										\bibitem[Moc22]{M22}
										Mocanu, Diana.
										Asymptotic Fermat for signatures $(p,p,2)$ and $(p,p,3)$ over totally real fields. Mathematika 68 (2022), no. 4, 1233--1257.
										
										
										
										%		\bibitem[Kat81]{Kat81}
										%		Katz, Nicholas M.
										%		Galois properties of torsion points on abelian varieties.
										%		Invent. Math. 62 (1981), no. 3, 481--502.
										
										
										%			\bibitem[Kid02]{K02}
										%			Kida, Masanari. 
										%			Potential good reduction of elliptic curves.
										%			J. Symbolic Comput. 34 (2002), no. 3, 173--180. 
										
										%			\bibitem[Kra90]{K90}
										%			Kraus, Alain
										%			Sur le défaut de semistabilité des courbes elliptiques à réduction additive. (French)[On the failure of semistability of elliptic curves with additive reduction]
										%			Manuscripta Math. 69 (1990), no. 4, 353–385. 
										
										%			\bibitem[Sil86]{S86}
										%			Silverman J. H.,
										%			The arithmetic of elliptic curves,
										%			Graduate Texts in Mathematics, vol. 106 (Springer, Dordrecht, 1986).
										
										
										\bibitem[Rib97]{R97}
										Ribet, Kenneth A.
										On the equation $a^p+2^\alpha b^p+c^p=0$.
										Acta Arith. 79 (1997), no. 1, 7--16.
										
										
										
										\bibitem[Sil94]{S94}
										Silverman, Joseph H.
										Advanced topics in the arithmetic of elliptic curves. Graduate Texts in Mathematics, 151. Springer-Verlag, New York, 1994.
										
										
										
										\bibitem[SS18]{SS18}
										\c{S}eng\"un, Mehmet Haluk; Siksek, Samir.
										On the asymptotic Fermat's last theorem over number fields. Comment. Math. Helv. 93 (2018), no. 2, 359--375.
										
										
										
%										\bibitem[Tay89]{T89}
%										Taylor, Richard.
%										On Galois representations associated to Hilbert modular forms.
%										Invent. Math. 98 (1989), no. 2, 265--280.
										
										%		\bibitem[Sil09]{S09}
										%		Silverman, Joseph H.
										%		The arithmetic of elliptic curves. Second edition. Graduate Texts in Mathematics, 106. Springer, Dordrecht, 2009.
										
										\bibitem[Tri]{T20}
										Triantafillou, Nicholas.
										The unit equation has no solutions in number fields of degree prime to $3$ where $3$ splits completely. To appear in Proceedings of the AMS, Series B.
%										
%										to appear in Proceedings of the AMS, Series B (2021).
										
										
										\bibitem[Tur18]{Tur18}
										\c{T}urca\c{s}, George C.
										On Fermat's equation over some quadratic imaginary number fields. Res. Number Theory 4 (2018), no. 2, Paper No. 24, 16 pp.
										
										
										
										\bibitem[Tur20]{Tur20}
										\c{T}urca\c{s}, George C. 
										On Serre's modularity conjecture and Fermat's equation over quadratic imaginary fields of class number one. J. Number Theory 209 (2020), 516--530. 
										
										
										

										\bibitem[Wil95]{W95}
										Wiles, Andrew.
										Modular elliptic curves and Fermat's last theorem. Ann. of Math. (2) 141 (1995), no. 3, 443--551.
										
											
										
										%			\bibitem[Sut12]{S12}
										%			Sutherland, Andrew V.
										%			A local-global principle for rational isogenies of prime degree.
										%			J. Théor. Nombres Bordeaux 24 (2012), no. 2, 475--485. 
										
										%			\bibitem[Tak15]{T15}
										%			Takeshi, Nao. 
										%			Elliptic curves with good reduction everywhere over cubic fields. Int. J. Number Theory 11 (2015), no. 4, 1149--1164.
										
										%			\bibitem[Ume98]{U98}	
										%			Umegaki, Atsuki. 
										%			A construction of everywhere good $\Q$-curves with $p$-isogeny. Tokyo J. Math. 21 (1998), no. 1, 183--200.
										
										%			\bibitem[Zim21]{Z21}
										%			Zimmermann, Leif. 
										%			The torsion in the cohomology of wild elliptic fibers.
										%			J. Pure Appl. Algebra 225 (2021), no. 3, Paper No. 106522, 13 pp.
										
										
									\end{thebibliography}
								\end{document}